\def\R{{\mathbb R}}
\newtheorem{lemma}{Lemma}[section]
\newtheorem{theorem}[lemma]{Theorem}
\newtheorem{remark}[lemma]{Remark}
\newtheorem{prop}[lemma]{Proposition}
\newtheorem{coro}[lemma]{Corollary}
\newtheorem{definition}[lemma]{Definition}
\newtheorem{example}[lemma]{Example}
\numberwithin{equation}{section}
\begin{document}

\title[Poisson stable solutions for SDEs]
{Periodic, Quasi-Periodic, Almost Periodic, Almost Automorphic, Birkhoff Recurrent and
Poisson Stable Solutions for Stochastic Differential Equations}

\author{David Cheban}
\address{D. Cheban\footnote{The permanent address of D. Cheban is: State University of Moldova, Faculty of Mathematics and
Informatics, Department of Mathematics, A. Mateevich Street 60, MD--2009 Chi\c{s}in\u{a}u, Moldova}: School of Mathematical Sciences,
Dalian University of Technology, Dalian 116024, P. R. China}
\email{cheban@usm.md; davidcheban@yahoo.com}

\author{Zhenxin Liu}
\address{Z. Liu (Corresponding author): School of Mathematical Sciences,
Dalian University of Technology, Dalian 116024, P. R. China}
\email{zxliu@dlut.edu.cn}


\date{February 8, 2017}
\subjclass[2010]{Primary: 34C25, 34C27, 37B20, 60H10; Secondary: 34D20.} 
\keywords{Stochastic differential equation; Quasi-periodic solution;
Bohr/Levitan almost periodic solution; almost automorphic solution;
Birkhoff recurrent solution; Poisson stable solution.}


\begin{abstract}

The paper is dedicated to studying the problem of Poisson stability
(in particular stationarity, periodicity, quasi-periodicity, Bohr almost
periodicity, Bohr almost automorphy, Birkhoff recurrence, 
almost recurrence in the sense of Bebutov, Levitan almost periodicity, pseudo-periodicity,
pseudo-recurrence, Poisson stability) of
solutions for semi-linear stochastic equation
$$
dx(t)=(Ax(t)+f(t,x(t)))dt +g(t,x(t))dW(t)\quad (*)
$$
with exponentially stable linear operator $A$ and Poisson stable in
time coefficients $f$ and $g$. We prove that if the functions $f$
and $g$ are appropriately ``small", then equation $(*)$ admits at
least one solution which has the same character of recurrence as the
functions $f$ and $g$.
\end{abstract}

\maketitle

\section{Introduction}

A continuous function $\varphi$ defined on real line $\mathbb R$
with values in a metric space $(X,\rho)$ is said to be Poisson
stable \cite{Sch72,scher75,Sch85,sib} in the positive
(respectively, negative) direction if there is a sequence
$\{t_n\}\subset \mathbb R$ with $t_n\to +\infty$ (respectively,
$ t_n\to -\infty$) such that $\varphi(t+t_n)\to \varphi(t)$
uniformly with respect to $t$ on every compact interval $[-l,l]$
($l>0$) as $n\to \infty$. If $\varphi$ is Poisson stable in both
directions, then it is called Poisson stable.

One considers \cite{Sch72,scher75,Sch85,sib} the following classes of
Poisson stable functions: stationary (respectively, periodic,
quasi-periodic \cite{Bol_1893,Bol_1906}, Bohr almost periodic
\cite{Bohr_1923,Bohr_I,Bohr_II,Bohr_III,Bohr_I1947},
 almost automorphic \cite{Boh_1955,B62,Wee_1965}, Birkhoff recurrent
\cite{Bir_1927}, Levitan almost periodic \cite{Lev_1938,Lev_1953},
almost recurrent in the sense of Bebutov \cite{Beb_1940,sib}, pseudo-periodic \cite[p.32]{Bohr_I1947},
pseudo-recurrent \cite{Sch68,Sch72,sib},  Poisson stable \cite{Sch72,sib}) functions, among
others.

In his works \cite{Sch72,scher75,Sch85,scher-ch,ShFa_1977}, B. A. Shcherbakov
systematically studied the problem of existence of Poisson stable solutions of the equation
\begin{equation}\label{eqI1}
x'=f(t,x), \quad x\in\mathfrak B
\end{equation}
with the  right-hand side $f$ Poisson stable in $t\in\mathbb R$
uniformly with respect to $x$ on every compact subset
from $\mathfrak B$, where $\mathfrak B$ is a Banach space.

To study this problem, B. A. Shcherbakov established
a method (principle) of comparability of functions by character of their recurrence.
Using his method B. A. Shcherbakov studied different classes
of equations of the form (\ref{eqI1}) for which he gave conditions of
existence at least one (or exactly one) solution with the same
character of recurrence as right-hand side $f$. He named this type of solution  {\em comparable}
(respectively, {\em uniformly comparable}) {\em solution} of equation (\ref{eqI1}).

Later the works of B. A. Shcherbakov were extended and generalized by many authors:
I. Bronshtein \cite[ChIV]{bro75}, T. Caraballo and
D. Cheban \cite{CC_I,CC_II,CC_2011,CC_2013_1}, D. Cheban \cite{Che_2008,Che_2009},
D. Cheban and C. Mammana \cite{CM_2005}, D. Cheban and B. Schmalfuss \cite{CS_2008}, and others.

In this paper, we try to extend and generalize Shcherbakov's
ideas and methods to study the Poisson stability of solutions for stochastic
differential equations
\begin{equation*}\label{eqI2}
dx(t)=f(t,x(t))dt + g(t,x(t))dW(t),
\end{equation*}
where $f$ and $g$ are Poisson stable functions in $t$.

Note that this problem was studied before only for periodic, Bohr almost
periodic and Bochner almost automorphic equations: see, e.g. \cite{DTr,DT,Ich,Kh,Mo} for periodic
equations, \cite{AT,BD,DT,Hal,KMR,LW_2016,T,WL} for Bohr almost periodic
equations and \cite{CZ,FL,LS,X} for Bochner almost automorphic equations,
and references therein. It should be pointed out that
either Bohr almost periodic or Bochner almost automorphic solutions can be only in distribution sense
instead of in square-mean sense, see \cite{KMR,LS} for details.
We consider in our present work the general problem of Poisson stability
for all classes listed above.

This paper is organized as follows.

In the second section we collect some known notions and facts.
Namely we present the definitions of all important classes of Poisson stable functions and
their basic properties. We also give a short survey
of Shcherbakov's results on comparability of functions by character of their recurrence.

The third section is dedicated to studying Poisson stable solutions for the linear equation
\begin{equation}\label{eqI3}
dx(t)=(Ax(t) + f(t))dt + g(t)dW(t)
\end{equation}
with exponentially stable linear operator $A$ (generally unbounded). The main result of this section (Theorem \ref{thLS11_2})
states that equation (\ref{eqI3}) with bounded coefficients $f$ and $g$ admits a unique bounded solution $\varphi$ which has the same character of
recurrence in distribution as $f$ and $g$.

In the fourth section we study the problem of Poisson stability for the semi-linear equation
\begin{equation}\label{eqI5}
dx(t)=(Ax(t)+F(t,x(t)))dt + G(t,x(t))dW(t).
\end{equation}
We prove (Theorem \ref{t3.4.4}) that the equation
(\ref{eqI5}) has a unique bounded solution $\xi$ which has the same character of recurrence as the functions $F$ and $G$.

The fifth section is dedicated to studying the dissipativity (Theorem \ref{thC1}) and the convergence (Theorem \ref{thC2}) for equation
(\ref{eqI5}).

In the last section, we give some applications of our theoretical results.

\section{Preliminaries}

\subsection{The space $C(\mathbb R, X)$}
Let $(X,\rho)$ be a complete metric space. Denote by $C(\mathbb
R,X)$ the space of all continuous functions $\varphi :\mathbb R
\to X$ equipped with the distance
\begin{equation*}\label{eqD1}
d(\varphi,\psi):=\sup\limits_{L>0}\min\{\max\limits_{|t|\le
L}\rho(\varphi(t),\psi(t)),L^{-1}\}.
\end{equation*}
The space $(C(\mathbb R,X),d)$ is a complete metric space (see, for
example, \cite[ChI]{Sch72},\cite{Sch85,sib}). Throughout the paper,
convergence in $C(\mathbb R, X)$ means the convergence with respect
to this metric $d$ if not specified otherwise.

\begin{lemma}\label{l1} {\rm(\cite[ChI]{Sch72},\cite{Sch85,sib})} The following statements
hold:
\begin{enumerate}
\item $d(\varphi,\psi) = \varepsilon$ if and only if
$$
\max\limits_{|t|\le \varepsilon^{-1}}\rho(\varphi(t),\psi(t))=\varepsilon ;
$$
\item $d(\varphi,\psi)<\varepsilon$ if and only if
$$
\max\limits_{|t|\le \varepsilon^{-1}}\rho(\varphi(t),\psi(t))<\varepsilon ;
$$
\item $d(\varphi,\psi)>\varepsilon$ if and only if
$$
\max\limits_{|t|\le \varepsilon^{-1}}\rho(\varphi(t),\psi(t))>\varepsilon .
$$
\end{enumerate}
\end{lemma}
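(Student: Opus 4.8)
The plan is to reduce everything to the monotone ``profile'' $m(L):=\max_{|t|\le L}\rho(\varphi(t),\psi(t))$, so that by definition $d(\varphi,\psi)=\sup_{L>0}\min\{m(L),L^{-1}\}$. First I would record the two structural facts I will lean on throughout: since $t\mapsto\rho(\varphi(t),\psi(t))$ is continuous and the interval $[-L,L]$ grows with $L$, the function $m$ is continuous and nondecreasing on $(0,\infty)$, while $L\mapsto L^{-1}$ is continuous and strictly decreasing. My strategy is to establish statement (i) by locating the point where the supremum is attained, and then to deduce (ii) and (iii) from (i) by a single monotonicity argument, rather than analyzing the supremum three separate times. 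The degenerate case $\varphi=\psi$, where $m\equiv 0$ forces $d=0$ and all three equivalences hold trivially for $\varepsilon>0$, I would dispose of at the outset.

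The heart of the matter — and the step I expect to be the main obstacle — is computing $\sup_{L>0}\min\{m(L),L^{-1}\}$ explicitly. Here I would study the auxiliary function $g(L):=L\,m(L)$, which is continuous and nondecreasing, being a product of the strictly increasing $L$ and the nonnegative nondecreasing $m$. Assuming $\varphi\ne\psi$ one has $g(L)\to 0$ as $L\to 0^+$ and $g(L)\to\infty$ as $L\to\infty$, so by the intermediate value theorem there is $L^*$ with $g(L^*)=1$, i.e. $m(L^*)=(L^*)^{-1}$; the monotonicity of $m$ moreover forces this crossover level to be unique. The key estimate is then the two-region comparison: for $L\le L^*$ one has $g(L)\le 1$, hence $m(L)\le L^{-1}$ and $\min\{m(L),L^{-1}\}=m(L)\le m(L^*)$; for $L\ge L^*$ one has $g(L)\ge 1$, hence $\min\{m(L),L^{-1}\}=L^{-1}\le (L^*)^{-1}=m(L^*)$. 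Thus the supremum equals $m(L^*)=(L^*)^{-1}$ and is attained at $L^*$. Writing $\varepsilon:=d(\varphi,\psi)=m(L^*)=(L^*)^{-1}$ gives $L^*=\varepsilon^{-1}$ and therefore $m(\varepsilon^{-1})=\varepsilon$, which is exactly (i); conversely, if $m(\varepsilon^{-1})=\varepsilon$ then $L=\varepsilon^{-1}$ is a crossover point and the same estimate identifies $d(\varphi,\psi)=\varepsilon$.

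Finally I would derive (ii) and (iii) from (i) together with monotonicity, avoiding any further supremum computation. Consider $F(\varepsilon):=m(\varepsilon^{-1})-\varepsilon$; as $\varepsilon$ increases, $\varepsilon^{-1}$ decreases so $m(\varepsilon^{-1})$ is nonincreasing while $-\varepsilon$ is strictly decreasing, hence $F$ is continuous and strictly decreasing, with unique zero at $\varepsilon=d(\varphi,\psi)$ by (i). Strict monotonicity then yields $F(\varepsilon)<0 \Leftrightarrow \varepsilon>d(\varphi,\psi)$ and $F(\varepsilon)>0\Leftrightarrow \varepsilon<d(\varphi,\psi)$. Unwinding the first gives $m(\varepsilon^{-1})<\varepsilon \Leftrightarrow d(\varphi,\psi)<\varepsilon$, which is statement (ii), and unwinding the second gives $m(\varepsilon^{-1})>\varepsilon \Leftrightarrow d(\varphi,\psi)>\varepsilon$, which is statement (iii). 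The whole argument thus rests on the single nontrivial observation that the supremum defining $d$ is governed by the unique balance point $L\,m(L)=1$.
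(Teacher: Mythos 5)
Your argument is correct and complete: the reduction to $m(L)=\max_{|t|\le L}\rho(\varphi(t),\psi(t))$, the identification of the unique crossover $L^*$ with $m(L^*)=(L^*)^{-1}$ as the point where the supremum is attained, and the monotonicity of $\varepsilon\mapsto m(\varepsilon^{-1})-\varepsilon$ together give all three equivalences, with the degenerate case $\varphi=\psi$ correctly set aside. Note that the paper itself supplies no proof of this lemma --- it is quoted from Shcherbakov and Sibirsky with citations only --- so there is nothing to compare against; your crossover-point computation is the standard argument for this metric and can stand as a self-contained justification.
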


\begin{remark}\label{remD1} \rm
1. The distance $d$ generates on $C(\mathbb R,X)$ the
compact-open topology.

2. The following statements are equivalent:
\begin{enumerate}
\item $d(\varphi_n,\varphi)\to 0$ as $ n\to \infty$;
\item  $\lim\limits_{n\to \infty}\max\limits_{|t|\le
L}\rho(\varphi_n(t),\varphi(t))=0$ for each $L>0$;
\item there exists a sequence $l_n\to +\infty$ such that $\lim\limits_{n\to \infty}\max\limits_{|t|\le
l_n}\rho(\varphi_n(t),\varphi(t))=0$.
\end{enumerate}
\end{remark}

\subsection{Poisson stable functions}

Let us recall the types of Poisson stable functions to be studied in this paper; we refer the reader
to \cite{Sel_71,Sch72,Sch85,sib} for further details and the relations among these types of functions.

\begin{definition} \rm
A function $\varphi\in C(\mathbb R,X)$ is called {\em stationary} (respectively, {\em $\tau$-periodic})
if $\varphi(t)=\varphi(0)$ (respectively, $\varphi(t+\tau)=\varphi(t)$) for all $t\in \mathbb R$.
\end{definition}

\begin{definition} \rm
Let $\varepsilon >0$. A number $\tau \in \mathbb R$ is called {\em $\varepsilon$-almost period} of the function
$\varphi$ if $\rho(\varphi(t+\tau),\varphi(t))<\varepsilon$ for all $t\in\mathbb R$. Denote by $\mathcal T(\varphi,\varepsilon)$
the set of $\varepsilon$-almost periods of $\varphi$.
\end{definition}

\begin{definition} \rm
A function $\varphi \in C(\mathbb R,X)$ is said to be {\em Bohr almost periodic} if the set of
$\varepsilon$-almost periods of $\varphi$
is {\em relatively dense} for each $\varepsilon >0$, i.e. for each $\varepsilon >0$ there exists $l=l(\varepsilon)>0$ such that
$\mathcal T(\varphi,\varepsilon)\cap [a,a+l]\not=\emptyset$ for all $a\in\mathbb R$.
\end{definition}

\begin{definition} \label{ppf}\rm 
A function $\varphi \in C(\mathbb R,X)$ is said to be {\em pseudo-periodic} in the positive
(respectively, negative) direction if for each $\varepsilon >0$ and
$l>0$ there exists a $\varepsilon$-almost period $\tau >l$
(respectively, $\tau <-l$) of the function $\varphi$. The function
$\varphi$ is called pseudo-periodic if it is pseudo-periodic in both
directions.
\end{definition}

\begin{definition}\rm
For given $\varphi\in C(\mathbb R, X)$, denote by $\varphi^h$ the
{\em $h$-translation of $\varphi$}, i.e. $\varphi^h(t)=\varphi(h+t)$
for $t\in\mathbb R$. The {\em hull of $\varphi$}, denoted by
$H(\varphi)$, is the set of all the limits of $\varphi^{h_n}$ in
$C(\mathbb R, X)$, i.e.
\[
H(\varphi):=\{\psi\in C(\mathbb R, X): \psi=\lim_{n\to\infty}
\varphi^{h_n} \hbox{ for some sequence } \{h_n\} \subset \mathbb R\}.
\]
\end{definition}

It is well-known (see, e.g. \cite{Ch2015}) that the mapping $\sigma: \mathbb R\times C(\mathbb R, X)\to C(\mathbb R, X)$ defined by $\sigma(h,\varphi) =\varphi^h$
is a dynamical system, i.e. $\sigma(0,\varphi)=\varphi$, $\sigma(h_1+h_2,\varphi)=\sigma(h_2,\sigma(h_1,\varphi))$ and
the mapping $\sigma$ is continuous. In particular, the mapping $\sigma$ restricted to $\mathbb R\times H(\varphi)$ is a
dynamical system.

\begin{remark} \rm
A function $\varphi \in C(\mathbb R,X)$ is pseudo-periodic in the
positive (respectively, negative) direction if and only if there is
a sequence $t_n\to +\infty$ (respectively, $t_n\to -\infty$) such
that $\varphi^{t_n}$ converges to $\varphi$ uniformly in $t\in
\mathbb R$ as $n\to \infty$.
\end{remark}

\begin{definition} \rm
A number $\tau\in\mathbb R$ is said to be {\em $\varepsilon$-shift} for $\varphi \in C(\mathbb R,X)$ if
$d(\varphi^{\tau},\varphi)<\varepsilon$.
\end{definition}

\begin{definition} \rm
A function $\varphi \in C(\mathbb R,X)$ is called {\em almost recurrent (in the sense of Bebutov)} if for every
$\varepsilon >0$ the set $\{\tau :\ d(\varphi^{\tau},\varphi)<\varepsilon\}$ is relatively dense.
\end{definition}

\begin{definition} \rm
A function $\varphi\in C(\mathbb R,X)$ is called {\em Lagrange stable} if $\{\varphi^{h}:\ h\in \mathbb R\}$
is a relatively compact subset of $C(\mathbb R,X)$.
\end{definition}

\begin{lemma}\label{l2} {\rm(\cite[ChI]{Sch72})} Let $\varphi \in C(\mathbb R,X)$, then the following statements are equivalent:
\begin{enumerate}
\item the function $\varphi$ is Lagrange stable;
\item the function $\varphi$ is uniformly continuous on $\mathbb R$ and its image $\varphi (\mathbb R)$ is a relatively compact subset of $X$.
\end{enumerate}
\end{lemma}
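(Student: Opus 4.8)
The plan is to recognize this as an Arzel\`a--Ascoli-type characterization and to exploit the fact (Remark \ref{remD1}) that the metric $d$ induces the compact-open topology, so that convergence in $C(\R,X)$ is precisely uniform convergence on every compact interval $[-L,L]$. Throughout I write $\Sigma:=\{\varphi^h:\ h\in\R\}$ for the set of translates, whose relative compactness is the content of Lagrange stability.

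For the implication $(i)\Rightarrow(ii)$ I would argue in two parts. Relative compactness of the image $\varphi(\R)$ follows from continuity of the evaluation map $\mathrm{ev}_0:C(\R,X)\to X$, $\psi\mapsto\psi(0)$, which is continuous for the compact-open topology: since $\overline{\Sigma}$ is compact, its image $\mathrm{ev}_0(\overline{\Sigma})$ is a compact subset of $X$ containing $\mathrm{ev}_0(\Sigma)=\varphi(\R)$. For uniform continuity I would proceed by contradiction. If it failed, there would be $\varepsilon_0>0$ and sequences $t_n,s_n\in\R$ with $h_n:=s_n-t_n\to 0$ but $\rho(\varphi(s_n),\varphi(t_n))\ge\varepsilon_0$. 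Passing to a subsequence, Lagrange stability gives $\varphi^{t_n}\to\psi$ in $C(\R,X)$, hence uniformly on $[-1,1]$. Writing $\rho(\varphi(s_n),\varphi(t_n))=\rho(\varphi^{t_n}(h_n),\varphi^{t_n}(0))$ and combining the uniform convergence with the continuity of the limit $\psi$ at $0$, both $\varphi^{t_n}(h_n)$ and $\varphi^{t_n}(0)$ converge to $\psi(0)$, forcing $\rho(\varphi(s_n),\varphi(t_n))\to 0$, a contradiction.

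For the converse $(ii)\Rightarrow(i)$ I would verify the two hypotheses of the Arzel\`a--Ascoli theorem for $\Sigma$ on each compact interval and then diagonalize. Uniform continuity of $\varphi$ makes $\Sigma$ equicontinuous, since the same modulus of continuity serves every translate $\varphi^h$ because $|(h+t)-(h+s)|=|t-s|$. Relative compactness of $\varphi(\R)$ yields pointwise relative compactness, as $\{\varphi^h(t):\ h\in\R\}=\varphi(\R)$ for each fixed $t$. Given any sequence $\varphi^{h_n}$, the classical Arzel\`a--Ascoli theorem on $C([-k,k],X)$ (valid for a complete metric target) produces, for each $k$, a subsequence converging uniformly on $[-k,k]$; a diagonal extraction then gives a single subsequence converging uniformly on every compact interval, with continuous limit. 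By Remark \ref{remD1} this is convergence in $C(\R,X)$, so every sequence in $\Sigma$ has a convergent subsequence, and since $C(\R,X)$ is a complete metric space this means $\Sigma$ is relatively compact.

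The main obstacle is the uniform-continuity half of $(i)\Rightarrow(ii)$: relative compactness only delivers equicontinuity of $\Sigma$ on each fixed compact interval, whereas uniform continuity of $\varphi$ is a genuinely global statement, uniform over all of $\R$. The bridge is the translation invariance of $\Sigma$ — equicontinuity of the family at the single point $0$ is equivalent to uniform continuity of $\varphi$ — and the contradiction argument above makes this explicit by exploiting that the convergent subsequence of translates has a continuous limit.
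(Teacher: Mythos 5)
The paper states this lemma without proof, citing it directly from Shcherbakov's book \cite[ChI]{Sch72}, so there is no in-paper argument to compare against. Your proposal is a correct and complete self-contained proof: the Arzel\`a--Ascoli characterization with diagonal extraction handles $(ii)\Rightarrow(i)$, and the two points of $(i)\Rightarrow(ii)$ are handled soundly --- continuity of $\mathrm{ev}_0$ for the compact-open topology gives relative compactness of $\varphi(\mathbb R)$, and the contradiction argument for uniform continuity correctly exploits that $\varphi(s_n)=\varphi^{t_n}(h_n)$ and $\varphi(t_n)=\varphi^{t_n}(0)$ both converge to $\psi(0)$ by uniform convergence on $[-1,1]$ plus continuity of the limit $\psi$. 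This is the standard route one would expect for the cited result, and I see no gaps.
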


\begin{definition} \rm
A function $\varphi \in C(\mathbb R,X)$ is called {\em Birkhoff recurrent} if
it is almost recurrent and Lagrange stable.
\end{definition}

\begin{definition} \rm
A function $\varphi \in C(\mathbb R,X)$ is called {\em Poisson stable} in the positive (respectively, negative) direction
if for every $\varepsilon >0$ and $l>0$ there exists $\tau >l$ (respectively, $\tau <-l$) such that $d(\varphi^{\tau},\varphi)<\varepsilon$.
The function $\varphi$ is called Poisson stable if it is Poisson stable in both directions.
\end{definition}

In what follows, we denote as well $Y$ a complete metric space.

\begin{definition} \rm
A function $\varphi\in C(\mathbb R,X)$ is called {\em Levitan almost periodic} if there exists
a Bohr almost periodic function $\psi \in C(\mathbb R,Y)$ such that for any $\varepsilon >0$ there exists $\delta =\delta (\varepsilon)>0$
such that $d(\varphi^{\tau},\varphi)<\varepsilon$ for all $\tau \in \mathcal T(\psi,\delta)$, recalling that $\mathcal T(\psi,\delta)$
denotes the set of $\delta$-almost periods of $\psi$.
\end{definition}

\begin{remark} \rm
\begin{enumerate}
\item  Every Bohr almost periodic function is Levitan almost periodic.

\item  The function $\varphi \in C(\mathbb R,\mathbb R)$ defined by equality $\varphi(t)=\dfrac{1}{2+\cos t +\cos \sqrt{2}t}$
is Levitan almost periodic, but it is
not Bohr almost periodic \cite[ChIV]{Lev-Zhi}.
\end{enumerate}
\end{remark}

\begin{definition}\label{defAA1} \rm
A function $\varphi \in C(\mathbb R,X)$ is said to be {\em Bohr almost
automorphic} if it is Levitan almost periodic and Lagrange stable.
\end{definition}

\begin{remark}\label{remAA1} \rm
\begin{enumerate}

\item
The function $\varphi \in C(\mathbb R,X)$ is Bohr almost
automorphic if and only if for any sequence $\{t'_{n}\} \subset
\mathbb R$ there are a subsequence $\{t_n\}$ and some function
$\psi: \mathbb R\to X$ such that
\begin{equation}\label{eqAA1}
\varphi(t+t_n)\to \psi(t)\ \ \mbox{and}\ \ \psi(t-t_n)\to \varphi(t)
\end{equation}
uniformly in $t$ on every compact subset from $\mathbb R$.
Some authors call this later equivalent version  ``compact almost automorphy".

\item In \cite{Wee_1965} Veech introduced a bit weaker version of
Bohr almost automorphy as follows: the function $\varphi \in
C(\mathbb R,X)$ is called Bohr almost automorphic if it is Levitan
almost periodic and $\varphi(\mathbb R)$ is relatively compact.
In what follows, we mean our version when we mention
Bohr almost automorphy.

\item A function $\varphi \in C(\mathbb R,X)$ is said to be {\em Bochner almost automorphic} (see \cite{Boh_1955,B62} for details)
if from every sequence $\{t'_{n}\} \subset \mathbb R$ we can extract
a subsequence $\{t_n\}$ such that the relations in (\ref{eqAA1})
take place pointwise for $t\in \mathbb R$.

\item It is natural to consider almost automorphy in the sense of Bohr since the
solutions of differential equations satisfy this stronger property;
see \cite{SY} for details.
\end{enumerate}
\end{remark}

\begin{lemma}\label{lAA1} Suppose that the function $\varphi \in C(\mathbb R,X)$ is uniformly
continuous on $\mathbb R$ and almost automorphic in the sense of
Bochner. Then it is almost automorphic in the sense of Bohr.
\end{lemma}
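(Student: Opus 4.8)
The plan is to verify the criterion for Bohr almost automorphy stated in Remark \ref{remAA1}(1): starting from an arbitrary sequence $\{t'_n\}\subset\mathbb R$, I would extract a subsequence along which both relations in (\ref{eqAA1}) hold, and then upgrade the merely pointwise convergence supplied by Bochner almost automorphy to convergence that is uniform on every compact subset of $\mathbb R$. The driving mechanism for this upgrade is the uniform continuity of $\varphi$, which forces the family of translates $\{\varphi^{t_n}\}$ to be equicontinuous with a modulus of continuity that does not depend on $n$.

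Concretely, I would first fix $\{t'_n\}$ and use the Bochner property to pass to a subsequence $\{t_n\}$ and a function $\psi$ with $\varphi(t+t_n)\to\psi(t)$ and $\psi(t-t_n)\to\varphi(t)$ pointwise in $t$. Since $\varphi$ is uniformly continuous, for each $\varepsilon>0$ there is $\delta>0$ with $\rho(\varphi(t),\varphi(s))<\varepsilon$ whenever $|t-s|<\delta$; applying this to the shifted arguments $t+t_n$ and $s+t_n$ gives $\rho(\varphi(t+t_n),\varphi(s+t_n))<\varepsilon$ simultaneously for all $n$, so $\{\varphi^{t_n}\}$ is uniformly equicontinuous with the same modulus as $\varphi$. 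Letting $n\to\infty$ in this inequality, I would conclude that $\psi$ is itself uniformly continuous with the same modulus, whence the sequence $t\mapsto\psi(t-t_n)$ is equicontinuous as well.

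Next I would run the classical argument that an equicontinuous sequence converging pointwise converges uniformly on compacta. On a fixed interval $[-l,l]$, choose $\delta$ from equicontinuity for $\varepsilon/3$ and cover the interval by finitely many $\delta$-balls centred at $s_1,\dots,s_m$; pointwise convergence yields an $N$ with $\rho(\varphi(s_i+t_n),\psi(s_i))<\varepsilon/3$ for all $n\ge N$ and all $i$, while the equicontinuity bounds control $\rho(\varphi(t+t_n),\varphi(s_i+t_n))$ and, after passing to the limit $n\to\infty$, also $\rho(\psi(s_i),\psi(t))$. A triangle inequality then delivers $\rho(\varphi(t+t_n),\psi(t))<\varepsilon$ uniformly for $t\in[-l,l]$ and $n\ge N$. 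The same reasoning applied to the equicontinuous sequence $t\mapsto\psi(t-t_n)$ gives $\psi(t-t_n)\to\varphi(t)$ uniformly on $[-l,l]$.

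Since both relations in (\ref{eqAA1}) then hold uniformly on each compact subset of $\mathbb R$, the criterion of Remark \ref{remAA1}(1) is satisfied and $\varphi$ is Bohr almost automorphic. I expect the only delicate point to be the uniformity in $n$ of the equicontinuity estimate: it is precisely here that uniform continuity of $\varphi$, rather than mere continuity, is indispensable, for it is what allows a single $\delta$ to serve every translate $\varphi^{t_n}$ at once and, in the limit, the function $\psi$ as well.
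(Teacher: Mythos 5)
Your proof is correct, but it follows a genuinely different route from the paper's. The paper argues by contradiction: assuming the convergence in (\ref{eqAA1}) fails to be uniform on some $[-l_0,l_0]$, it invokes the compactness of $\overline{\varphi(\mathbb R)}$ (which comes from Bochner almost automorphy) together with uniform continuity and Lemma \ref{l2} to conclude that $\varphi$, and then $\psi\in H(\varphi)$, are Lagrange stable; it then extracts subsequences of $\{\varphi^{t_{n}}\}$ and $\{\psi^{-t_{n}}\}$ converging in $C(\mathbb R,X)$, identifies the limits via the pointwise relations, and derives a contradiction. You instead give a direct argument: uniform continuity of $\varphi$ yields a single modulus of equicontinuity for all translates $\varphi^{t_n}$, this modulus passes to $\psi$ in the pointwise limit (with $\le\varepsilon$ in place of $<\varepsilon$, which is immaterial), and the classical three-epsilon/finite-cover argument upgrades pointwise convergence of an equicontinuous sequence to uniform convergence on compacta, for both relations in (\ref{eqAA1}). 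Your version is more elementary and self-contained: it never uses the relative compactness of $\varphi(\mathbb R)$, Lemma \ref{l2}, or the hull machinery, and it isolates exactly where uniform continuity enters (a single $\delta$ serving every translate and the limit function). The paper's version is shorter given that Lagrange stability and the dynamical system on $C(\mathbb R,X)$ are already set up and reused throughout. Both proofs establish the compact-almost-automorphy criterion of Remark \ref{remAA1}-(i) and thus both rely on the stated equivalence with Definition \ref{defAA1}; neither verifies the definition (Levitan almost periodicity plus Lagrange stability) directly.
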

\begin{proof}
Suppose that the function $\varphi \in C(\mathbb R,X)$ is almost
automorphic in Bochner's sense and $\{t_n'\}$ is an arbitrary sequence
from $\mathbb R$, then there exists a subsequence $\{t_n\}$ of
$\{t_n'\}$ such that the relations (\ref{eqAA1}) hold for every
$t\in \mathbb R$. If, additionally, the function $\varphi$ is
uniformly continuous on $\mathbb R$, then the convergence in the
relations (\ref{eqAA1}) are uniform with respect to $t$ on every compact
interval $[-l,l]$ ($l>0$). In fact, if we suppose
that this is not true, then there exist $\varepsilon_0>0$, $l_0>0$ and a
subsequence $\{t_{n_k}\}\subseteq \{t_n\}$ such that at least one of the
inequalities
\begin{equation}\label{eqAA2}
\max\limits_{|t|\le l_0}\rho(\varphi(t+t_{k_n}),\psi(t))\ge
\varepsilon_0
\end{equation}
and
\begin{equation}\label{eqAA3}
\max\limits_{|t|\le l_0}\rho(\psi(t-t_{k_n}),\varphi(t))\ge
\varepsilon_0
\end{equation}
takes place.

Since the function $\varphi$ is almost automorphic in the sense of
Bochner, the closure $\overline{\varphi(\mathbb R)}$ of its image is a
compact subset of $X$. Since $\varphi$ is uniformly continuous,
it is Lagrange stable. Consequently, without loss of generality, we may suppose that the
sequence $\varphi^{t_n}$ converges in the space $C(\mathbb R,X)$.
Thus the function $\psi$, figuring in the relations (\ref{eqAA1}),
belongs to $H(\varphi)$ and $H(\psi)\subseteq H(\varphi)$. It's
clear that the function $\psi$ is also Lagrange stable and,
consequently, without loss of generality we may suppose that the
sequence $\{\varphi^{t_{n_k}}\}$ converges to $\psi$ and
$\{\psi^{-t_{n_k}}\}$ converges to $\varphi$ in the space $C(\mathbb
R,X)$. The last fact contradicts to relations (\ref{eqAA2}) and
(\ref{eqAA3}). This contradiction proves our statement.
\end{proof}

\begin{remark} \rm
The function $\varphi(t)=\sin(\frac{1}{2+\cos t +\cos \sqrt{2}t})$ is
\begin{enumerate}
\item almost automorphic in the sense of Bochner \cite[Example 3.1]{BG};

\item Levitan almost periodic,
but it is not Bohr almost automorphic, because $\varphi$ is not
uniformly continuous on $\mathbb R$ \cite[Ch.V,
pp.212--213]{Lev_1953}.
\end{enumerate}
\end{remark}

\begin{definition} \rm
A function $\varphi \in C(\mathbb R,X)$ is called {\em quasi-periodic with the spectrum of frequencies $\nu_1,\nu_2,\ldots,\nu_k$} if
the following conditions are fulfilled:
\begin{enumerate}
\item the numbers $\nu_1,\nu_2,\ldots,\nu_k$ are rationally independent;
\item there exists a continuous function $\Phi :\mathbb R^{k}\to X$ such that
$\Phi(t_1+2\pi,t_2+2\pi,\ldots,t_k+2\pi)=\Phi(t_1,t_2,\ldots,t_k)$ for all $(t_1,t_2,\ldots,t_k)\in \mathbb R^{k}$;
\item $\varphi(t)=\Phi(\nu_1 t,\nu_2 t,\ldots,\nu_k t)$ for $t\in \mathbb R$.
\end{enumerate}
\end{definition}

Let $\varphi \in C(\mathbb R,X)$. Denote by $\mathfrak N_{\varphi}$ (respectively,
$\mathfrak M_{\varphi}$) the family of all sequences
$\{t_n\}\subset \mathbb R$ such that $\varphi^{t_n} \to \varphi$ (respectively,
$\{\varphi^{t_n}\}$ converges) in $C(\mathbb R,X)$ as $n\to \infty$.

By $\mathfrak N_{\varphi}^{u}$ (respectively, $\mathfrak M_{\varphi}^{u}$) we denote the family of sequences
$\{t_n\}\in \mathfrak N_{\varphi}$ such that $\varphi^{t_n}$ converges to $\varphi$
(respectively,  $\varphi^{t_n}$ converges) uniformly in $t\in\mathbb R$ as $n\to \infty$.

\begin{remark}\rm
\begin{enumerate}
\item The function $\varphi \in C(\mathbb R,X)$ is pseudo-periodic in the positive
(respectively, negative) direction if and only if there is a
sequence $\{t_n\}\in \mathfrak N_{\varphi}^{u}$ such that $t_n\to
+\infty$ (respectively, $t_n\to -\infty$) as $n\to \infty$.

\item Let $\varphi \in C(\mathbb R,X)$, $\psi \in C(\mathbb R,Y)$ and
$\mathfrak N_{\psi}^{u} \subseteq \mathfrak N_{\varphi}^{u}$. If the
function $\psi$ is pseudo-periodic in the positive (respectively,
negative) direction, then so is $\varphi$.
\end{enumerate}
\end{remark}

\begin{definition}\label{defPR}\rm 
A function $\varphi \in C(\mathbb
R,X)$ is called  {\em pseudo-recurrent} if for any
$\varepsilon >0$ and $l\in\mathbb R$ there exists $L\ge l$ such that for any $\tau_0\in \mathbb R$ we can find a number $\tau \in [l,L]$
satisfying
$$
\sup\limits_{|t|\le 1/\varepsilon}\rho(\varphi(t+\tau_0
+\tau),\varphi(t+\tau_0))\le \varepsilon.
$$
\end{definition}

\begin{remark}\label{remPR} \rm (\cite{Sch68,Sch72,Sch85,sib})
\begin{enumerate}
\item Every Birkhoff recurrent function is pseudo-recurrent, but the inverse
statement is not true in general.

\item If the function $\varphi \in C(\mathbb R,X)$ is
pseudo-recurrent, then every function $\psi\in H(\varphi)$ is
pseudo-recurrent.

\item If  the function $\varphi \in C(\mathbb R,X)$  is
Lagrange stable and  every function $\psi\in H(\varphi)$ is Poisson
stable, then $\varphi$ is pseudo-recurrent.
\end{enumerate}
\end{remark}

Finally, we remark that a Lagrange stable function is not Poisson stable in general, but
all other types of functions introduced above are Poisson stable.

\subsection{Shcherbakov's comparability method by character of recurrence}

\begin{definition} \rm
A function $\varphi \in C(\mathbb R,X)$ is said to be {\em comparable} (respectively, {\em uniformly comparable})
{\em by character of recurrence} with $\psi\in C(\mathbb R,Y)$ if $\mathfrak N_{\psi}\subseteq \mathfrak N_{\varphi}$
(respectively, $\mathfrak M_{\psi}\subseteq \mathfrak M_{\varphi}$).
\end{definition}

\begin{theorem}\label{th1}{\rm(\cite[ChII]{Sch72}, \cite{scher75})}
The following statements hold:
\begin{enumerate}
\item $\mathfrak M_{\psi}\subseteq \mathfrak M_{\varphi}$ implies $\mathfrak N_{\psi}\subseteq \mathfrak N_{\varphi}$, and hence
      uniform comparability implies comparability.

\item  Let $\varphi \in C(\mathbb R,X)$ be comparable by character of recurrence with $\psi\in C(\mathbb R,Y)$.
 If the function $\psi$ is stationary (respectively, $\tau$-periodic, Levitan almost periodic,
 almost recurrent, Poisson stable), then so is $\varphi$.
 \item Let $\varphi \in C(\mathbb R,X)$ be uniformly comparable by character of recurrence with $\psi\in C(\mathbb R,Y)$.
If the function $\psi$ is quasi-periodic with the spectrum of frequencies
$\nu_1,\nu_2,\dots,\nu_k$ (respectively, Bohr almost periodic,
Bohr almost automorphic, Birkhoff recurrent, Lagrange stable), then so is $\varphi$.

\item Let $\varphi \in C(\mathbb R,X)$ be uniformly comparable by character of recurrence with $\psi\in C(\mathbb R,Y)$
and $\psi$ be Lagrange stable. If $\psi$ is pseudo-periodic (respectively, pseudo-recurrent), then so is $\varphi$.
\end{enumerate}
\end{theorem}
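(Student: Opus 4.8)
The plan is to recast every notion of recurrence as a statement about the four families $\mathfrak N_\psi,\mathfrak M_\psi,\mathfrak N_\psi^u,\mathfrak M_\psi^u$ attached to the Bebutov shift $\sigma$, and then to observe that each such statement is monotone under the relevant inclusion. Two foundational lemmas are needed. The first is the content of (i): if $\{t_n\}\in\mathfrak N_\psi$ then $\psi^{t_n}\to\psi$, hence $\{t_n\}\in\mathfrak M_\psi\subseteq\mathfrak M_\varphi$, so $\varphi^{t_n}$ converges to some $\bar\varphi\in H(\varphi)$, and everything reduces to proving $\bar\varphi=\varphi$. The second is the uniform counterpart of (i): under $\mathfrak M_\psi\subseteq\mathfrak M_\varphi$ one also has $\mathfrak M_\psi^u\subseteq\mathfrak M_\varphi^u$ and $\mathfrak N_\psi^u\subseteq\mathfrak N_\varphi^u$. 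I expect the identification $\bar\varphi=\varphi$ to be the main obstacle: the metric $d$ induces the compact-open topology and is \emph{not} translation invariant, so one cannot simply apply the shift $-t_n$ to $\varphi^{t_n}\to\bar\varphi$. I would instead work with the product $(\varphi,\psi)\in C(\mathbb{R},X\times Y)$, extract a subsequence along which the reversed shifts of $\psi$ are controlled, and use continuity of $\sigma$ for each fixed shift amount together with $\mathfrak M_\psi\subseteq\mathfrak M_\varphi$ to pin $\bar\varphi$ down to $\varphi$. The two assertions of (i) in fact coincide: \emph{uniform comparability implies comparability} is precisely $\mathfrak M_\psi\subseteq\mathfrak M_\varphi\Rightarrow\mathfrak N_\psi\subseteq\mathfrak N_\varphi$.

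For (ii) I would assemble a dictionary translating each property of $\psi$ into a condition on $\mathfrak N_\psi$, and then transport it to $\varphi$ through $\mathfrak N_\psi\subseteq\mathfrak N_\varphi$. Concretely: $\psi$ is stationary iff every sequence lies in $\mathfrak N_\psi$; $\psi$ is $\tau$-periodic iff the constant sequence $t_n\equiv\tau$ lies in $\mathfrak N_\psi$; $\psi$ is Poisson stable iff $\mathfrak N_\psi$ contains one sequence tending to $+\infty$ and one to $-\infty$; $\psi$ is Levitan almost periodic iff $\mathfrak N_h^u\subseteq\mathfrak N_\psi$ for some Bohr almost periodic $h$; and $\psi$ is almost recurrent iff its $\varepsilon$-shift sets $\{\tau:d(\psi^\tau,\psi)<\varepsilon\}$ are relatively dense. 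In each case the inclusion delivers the conclusion at once: plugging the distinguished sequences into $\mathfrak N_\varphi$ yields $\varphi^\tau=\varphi$ (stationary, periodic) or the two one-sided return sequences (Poisson stable); chaining $\mathfrak N_h^u\subseteq\mathfrak N_\psi\subseteq\mathfrak N_\varphi$ gives the Levitan property with the \emph{same} auxiliary $h$; and relative density of the shift sets transfers for almost recurrence.

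For (iii) the same scheme runs on $\mathfrak M_\psi$ and its uniform refinement. Lagrange stability of $\psi$ is, by Lemma \ref{l2}, equivalent to relative compactness of the orbit, i.e. to every sequence admitting a subsequence in $\mathfrak M_\psi$; through $\mathfrak M_\psi\subseteq\mathfrak M_\varphi$ this passes verbatim to $\varphi$. Quasi-periodicity, Bohr almost periodicity, Bohr almost automorphy and Birkhoff recurrence each admit a Bochner-type description requiring a suitable shifted subsequence to converge \emph{uniformly}, together with Lagrange stability. The obstacle is that uniform comparability is stated with the plain family $\mathfrak M$, whereas these four classes involve uniform convergence; this is exactly what the second foundational lemma repairs. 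Granting it, each Bochner criterion transfers: the relevant uniformly convergent sequences for $\psi$ lie in $\mathfrak M_\psi^u\subseteq\mathfrak M_\varphi^u$, and combined with the already established Lagrange stability of $\varphi$ they certify the corresponding property (for quasi-periodicity one additionally checks that the module of frequencies is preserved).

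Finally, (iv) combines the uniform framework with Lagrange stability. By the remark relating pseudo-periodicity to $\mathfrak N^u$, $\psi$ is pseudo-periodic in the positive (respectively negative) direction iff $\mathfrak N_\psi^u$ contains a sequence tending to $+\infty$ (respectively $-\infty$); since $\varphi$ is Lagrange stable by (iii), the inclusion $\mathfrak N_\psi^u\subseteq\mathfrak N_\varphi^u$ furnishes the same one-sided sequences for $\varphi$. For pseudo-recurrence I would instead invoke Remark \ref{remPR}: $\varphi$ is Lagrange stable by (iii), and uniform comparability propagates Poisson stability to every point of $H(\varphi)$, so Remark \ref{remPR}(iii) yields pseudo-recurrence of $\varphi$. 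In summary, once the two foundational lemmas and the dictionary are in place, parts (ii)--(iv) are routine bookkeeping; the genuine difficulties are the limit identification in (i) and its uniform counterpart.
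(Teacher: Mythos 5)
First, a point of comparison: the paper does not prove Theorem \ref{th1} at all --- it is imported from Shcherbakov \cite[Ch.\ II]{Sch72} and \cite{scher75} --- so your proposal can only be judged on its own merits. Judged that way, the skeleton is reasonable but the two load-bearing steps are missing. In (i) the entire content is the identification $\bar\varphi=\varphi$, and the route you sketch (passing to the product function and ``extracting a subsequence along which the reversed shifts of $\psi$ are controlled'') does not work as stated: membership of $\{t_n\}$ in $\mathfrak N_\psi$ gives no control whatsoever on $\psi^{-t_n}$ for a merely Poisson stable $\psi$. The standard one-line fix --- which the paper itself uses in the proof of Lemma \ref{lAP1} for the uniform analogue --- is the interleaving trick: from $\{t_n\}\in\mathfrak N_\psi$ form $\tilde t_{2k-1}=t_k$, $\tilde t_{2k}=0$; then $\psi^{\tilde t_n}\to\psi$, so $\{\tilde t_n\}\in\mathfrak M_\psi\subseteq\mathfrak M_\varphi$ and $\varphi^{\tilde t_n}$ converges, but its even subsequence is constantly $\varphi$, so the limit is $\varphi$ and hence $\varphi^{t_k}\to\varphi$.

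The second genuine gap is your ``second foundational lemma'', namely that $\mathfrak M_\psi\subseteq\mathfrak M_\varphi$ alone implies $\mathfrak M^u_\psi\subseteq\mathfrak M^u_\varphi$ and $\mathfrak N^u_\psi\subseteq\mathfrak N^u_\varphi$. This is asserted, never proved, and carries essentially the whole weight of parts (iii) and (iv); it is not a formal consequence of the hypothesis (compact-open convergence of $\varphi^{t_n}$ says nothing about uniform convergence on $\mathbb R$), and as far as I can see it is at least as deep as part (iii) itself. In the classical theory the uniform conclusions are obtained class by class: e.g.\ for Bohr almost periodic $\psi$ one uses $\mathfrak M_\psi=\mathfrak M^u_\psi$ but must still \emph{prove} that $\varphi$ is Bohr almost periodic before one may conclude $\mathfrak M_\varphi=\mathfrak M^u_\varphi$; note that the paper's own Lemma \ref{lAP1} assumes the strictly stronger inclusion $\mathfrak M^u_\psi\subseteq\mathfrak M^u_\varphi$ as a hypothesis rather than deriving it. Relatedly, the transfers of almost recurrence and Levitan almost periodicity in (ii) require the quantitative bridge ``for every $\varepsilon>0$ there is $\delta>0$ such that $d(\psi^\tau,\psi)<\delta$ implies $d(\varphi^\tau,\varphi)<\varepsilon$'', which does follow from $\mathfrak N_\psi\subseteq\mathfrak N_\varphi$ by a contradiction-and-subsequence argument, but nowhere appears in your dictionary; without it, ``relative density transfers'' is an assertion, not a proof. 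The same omission affects (iv), where Poisson stability of every element of $H(\varphi)$ also needs an argument. In short, the steps you defer as routine bookkeeping are precisely where Shcherbakov's Chapter II does its work.
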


\begin{lemma}\label{lAP1}
Let $\varphi \in C(\mathbb R,X)$, $\psi \in C(\mathbb R,Y)$ and $\mathfrak M_{\psi}^{u} \subseteq \mathfrak M_{\varphi}^{u}$.
Then the following statements hold:
\begin{enumerate}
\item  $\mathfrak N_{\psi}^{u} \subseteq \mathfrak N_{\varphi}^{u}$;
\item
If the function $\psi$ is Bohr almost periodic, then so is $\varphi$.
\end{enumerate}
\end{lemma}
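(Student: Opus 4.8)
The plan is to prove the two assertions separately: (i) by a short ``interleaving'' argument that pins down the limit of $\varphi^{t_n}$, and (ii) by the Bochner compactness characterization of Bohr almost periodicity, which uses the hypothesis $\mathfrak M_\psi^u\subseteq\mathfrak M_\varphi^u$ directly.

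For part (i), I would fix $\{t_n\}\in\mathfrak N_\psi^u$, so that $\psi^{t_n}\to\psi$ uniformly on $\mathbb R$. Since uniform convergence is in particular convergence, $\{t_n\}\in\mathfrak M_\psi^u$, and the standing hypothesis $\mathfrak M_\psi^u\subseteq\mathfrak M_\varphi^u$ forces $\varphi^{t_n}$ to converge uniformly to some $\bar\varphi\in C(\mathbb R,X)$. The entire content of (i) then reduces to showing $\bar\varphi=\varphi$, after which $\varphi^{t_n}\to\varphi$ uniformly, i.e. $\{t_n\}\in\mathfrak N_\varphi^u$. The hard part is exactly this identification of the limit: because the translation map is an $\sup$-isometry, one cannot hope to bound $\sup_t\rho(\varphi(t+t_n),\varphi(t))$ by the rate at which $\psi^{t_n}$ returns; the quantity $\bar\varphi$ is dynamically consistent with being different from $\varphi$ unless one exploits the comparability hypothesis for a sequence other than $\{t_n\}$ itself.

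To carry this out I would interleave $\{t_n\}$ with the zero sequence: set $u_{2j}:=t_j$ and $u_{2j-1}:=0$. Since the odd-indexed translates are $\psi^{0}=\psi$ while the even-indexed ones satisfy $\sup_t\rho(\psi(t+t_j),\psi(t))\to 0$, a direct check gives $\sup_t\rho(\psi^{u_k}(t),\psi(t))\to 0$, so $\{u_k\}\in\mathfrak N_\psi^u\subseteq\mathfrak M_\psi^u\subseteq\mathfrak M_\varphi^u$. Hence $\varphi^{u_k}$ converges uniformly to a single limit $\Lambda$. Reading off the two subsequences then finishes the proof: the odd-indexed subsequence is constantly $\varphi^{0}=\varphi$, so $\Lambda=\varphi$, whereas the even-indexed subsequence is $\varphi^{t_j}\to\bar\varphi$, so $\Lambda=\bar\varphi$; uniqueness of the limit yields $\bar\varphi=\varphi$. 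This device replaces the delicate step in the analogous non-uniform comparability result (Theorem \ref{th1}(i)) by an elementary uniqueness-of-limit argument, which is available precisely because we work with uniform convergence.

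For part (ii), I would appeal to Bochner's classical criterion: a continuous function is Bohr almost periodic if and only if every sequence of its translates has a uniformly convergent subsequence. Given an arbitrary $\{h_n\}\subset\mathbb R$, the Bohr almost periodicity of $\psi$ yields a subsequence $\{h_{n_k}\}$ with $\psi^{h_{n_k}}$ uniformly convergent, that is $\{h_{n_k}\}\in\mathfrak M_\psi^u\subseteq\mathfrak M_\varphi^u$, whence $\varphi^{h_{n_k}}$ converges uniformly. Thus every sequence of translates of the continuous function $\varphi$ admits a uniformly convergent subsequence, and Bochner's criterion gives that $\varphi$ is Bohr almost periodic. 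I note that this part is independent of part (i), since it uses the comparability hypothesis directly through the precompactness of the orbit in the uniform topology.
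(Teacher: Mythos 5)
Your proof is correct, and part (i) is precisely the paper's argument: the same interleaving of $\{t_n\}$ with the zero sequence, followed by uniqueness of the uniform limit to identify $\bar\varphi=\varphi$. For part (ii) you apply Bochner's compactness criterion directly to $\varphi$, whereas the paper records the equivalent fact $\mathfrak M_{\psi}^{u}=\mathfrak M_{\psi}$ for Bohr almost periodic $\psi$ and then cites Theorem \ref{th1}; the two routes are interchangeable, since that equality is exactly Bochner's criterion for $\psi$ and your direct argument simply inlines what the citation delivers.
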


\begin{proof}
Let $\mathfrak M_{\psi}^{u} \subseteq \mathfrak M_{\varphi}^{u}$ and $\{t_n\}\in \mathfrak N_{\psi}^{u}$.
Consider the sequence $\{\tilde{t}_n\}$ defined as follow: $\tilde{t}_{2k-1}=t_k$ and $\tilde{t}_{2k}=0$ for
 any $k\in\mathbb N$. It is clear that $\{\tilde{t}_n\}\in \mathfrak N_{\psi}^{u}
 \subseteq \mathfrak M_{\psi}^{u} \subseteq \mathfrak M_{\varphi}^{u}$; consequently, there exists
 a function $\tilde\varphi \in C(\mathbb R,X)$ such that the sequence $\{\varphi^{\tilde{t}_n}\}$
 converges to $\tilde{\varphi}$ uniformly on $\mathbb R$.
 Since $\{\varphi^{t_n}\}$ is a subsequence of $\{\varphi^{\tilde{t}_n}\}$, we have $\varphi = \tilde{\varphi}$,
 i.e. $\{t_n\}\in \mathfrak N_{\varphi}^{u}$.

Let $\psi \in C(\mathbb R,Y)$ be Bohr almost periodic, then
$\mathfrak M_{\psi}^{u}= \mathfrak M_{\psi}$ (see, for example, \cite[ChII]{Sch72},\cite{scher75}). Consequently, we have
$\mathfrak M_{\psi}=\mathfrak M_{\psi}^{u}\subseteq \mathfrak M_{\varphi}^{u}\subseteq \mathfrak M_{\varphi}$. This means that
the function $\varphi$ is uniformly comparable by character of recurrence with $\psi$ and according to Theorem \ref{th1} the function
$\varphi$ is Bohr almost periodic.
\end{proof}

\subsection{The function space $BUC$}

\begin{definition}\label{defQ1} \rm
A function $F:\mathbb R\times X\to X$ is called {\em continuous at $t_0\in\mathbb R$ uniformly with respect to (w.r.t.) $x\in
Q$} if for any $\varepsilon >0$ there exists $\delta
=\delta(t_0,\varepsilon)>0$ such that $|t-t_0|<\delta$ implies
$\sup\limits_{x\in Q}\rho(F(t,x),F(t_0,x))<\varepsilon$. The
function $F:\mathbb R\times X\to X$ is called {\em continuous on
$\mathbb R$ uniformly w.r.t. $x\in Q$} if it is continuous at every
point $t_0\in\mathbb R$ uniformly w.r.t. $x\in Q$.
\end{definition}

\begin{remark}\label{remQ1}\rm
If $Q$ is a compact subset of $X$ and $F:\mathbb R\times X\to
X$ is a continous function, then $F$ is continuous on $\mathbb R$
uniformly w.r.t. $x\in Q$.
\end{remark}

Denote by $BUC(\mathbb R\times X,X)$ the set of all functions $F:\mathbb R\times X\to X$ possessing the
following properties:
\begin{enumerate}
\item continuous in $t$ uniformly w.r.t. $x$ on every bounded subset
$Q\subseteq X$;

\item bounded on every bounded subset from $\mathbb R\times X$.
\end{enumerate}

For $F,G\in BUC(\mathbb R\times X,X)$ and $\{Q_n\}$ a sequence of
bounded subsets from $X$ such that $Q_n\subset Q_{n+1}$ for any
$n\in\mathbb N$ and $X=\bigcup_{n\ge 1} Q_n$,
denote
\begin{equation}\label{eqQD1}
d(F,G):=\sum_{n=1}^{\infty}\frac{1}{2^n}\frac{d_n(F,G)}{1+d_{n}(F,G)},
\end{equation}
where $d_{n}(F,G):=\sup\limits_{|t|\le n,\ x\in Q_n}\rho(F(t,x),G(t,x))$.
Then it is immediate to check that $d$ makes $BUC(\mathbb R\times X,X)$ a complete metric space and $d(F_k,F)\to
0$ if and only if $F_n(t,x)\to F(t,x)$ uniformly w.r.t. $(t,x)$ on
every bounded subset from $\mathbb R\times X$.

For given $F\in BUC(\mathbb R\times X,X)$ and $\tau\in\mathbb R$,
denote by $F^\tau$ the {\em translation of $F$}, i.e. $F^\tau(t,x):=
F(t+\tau,x)$ for $(t,x)\in \mathbb R\times X$, and the {\em hull of
$F$} by $H(F):=\overline{\{F^\tau:\tau\in\mathbb R\}}$ with the
closure being taken under the metric $d$ given by \eqref{eqQD1}. It is immediate to check
that the mapping $\sigma: \mathbb R\times BUC(\mathbb R\times X,X)
\to BUC(\mathbb R\times X,X)$ defined by $\sigma(\tau,F):= F^\tau$
is a dynamical system, i.e. $\sigma(0,F) = F$,
$\sigma(\tau_1+\tau_2,F) = \sigma(\tau_2,\sigma(\tau_1,F))$ and the
mapping $\sigma$ is continuous. See \cite[\S 1.1]{Ch2015} for
details.

Denote by  $BC(X,X)$ the set of all
continuous and bounded on every bounded subset $Q\subset X$
functions $F:X\to X$ and let
$$
d_{BC}(F,G):=\sum_{n=1}^{\infty}\frac{1}{2^n}\frac{d_n(F,G)}{1+d_{n}(F,G)}
$$
for any $F,G\in BC(X,X)$, where $d_{n}(F,G):=\sup\limits_{x\in Q_n}\rho(F(x),G(x))$.
Then $BC(X,X)$ is a complete metric space.

Let now  $F\in BUC(\mathbb R\times X,X)$ and $\mathcal F :\mathbb R\to BC(X,X)$ a
mapping defined by equality $\mathcal F(t):=F(t,\cdot)$.

\begin{remark}\label{remBUC} \rm
It is not difficult to check that:
\begin{enumerate}
\item  $\mathfrak M_{F}=\mathfrak
M_{\mathcal F}$ for any $F\in BUC(\mathbb R\times X,X)$;

\item $\mathfrak M_{F}^{u}=\mathfrak
M_{\mathcal F}^{u}$ for any $F\in BUC(\mathbb R\times X,X)$.
\end{enumerate}
Here $\mathfrak M_{F}$ is the set of all sequences $\{t_n\}$ such that
$F^{t_n}$ converges in the space $BUC(\mathbb R\times X,X)$ and $\mathfrak M_{F}^{u}$ is
the set of all sequences $\{t_n\}$ such that $F^{t_n+t}$ converges
in the space $BUC(\mathbb R\times X,X)$ uniformly w.r.t. $t\in\mathbb R$.
\end{remark}

\section{Linear equations}

Let $\mathfrak B$ be a Banach space with the norm
$|\cdot|_{\mathfrak B}$. Consider the linear nonhomogeneous equation
\begin{equation}\label{eqLN1}
\dot{x}=Ax+f(t)
\end{equation}
on the space $\mathfrak B$, where $f\in C(\mathbb R,\mathfrak B)$ and $A$ is an infinitesimal generator which generates a
$C_0$-semigroup $\{U(t)\}_{t\ge 0}$ acting on $\mathfrak B$.


\begin{definition} \rm
A semigroup of operators $\{U(t)\}_{t\ge 0}$ is said to be
{\em  exponentially stable}, if there are positive
numbers $\mathcal N,\nu >0$ such that $||U(t)||\le \mathcal N
e^{-\nu t}$ for any $t\ge 0$.
\end{definition}

Denote by $C_{b}(\mathbb R,\mathfrak B)$ the Banach space of all
continuous and bounded mappings $\varphi :\mathbb R\to \mathfrak
B$ equipped with the norm
$||\varphi||_{\infty}:=\sup\{|\varphi(t)|_{\mathfrak B}:\
t\in\mathbb R\}$.



Let $(H,|\cdot|)$ be a real separable Hilbert space, $(\Omega ,
\mathcal F,\mathbb P)$ be a probability space, and $L^{2}(\mathbb
P,H)$ be the space of $H$-valued random variables $x$ such that
\begin{equation*}\label{eqLSDE03}
\mathbb E|x|^2 :=\int\limits_{\Omega}|x|^2 d\mathbb P<\infty .
\end{equation*}
Then $L^2(\mathbb P,H)$ is a Hilbert space equipped with the norm
\begin{equation*}\label{eqLSDE04}
||x||_2:=\Big{(}\int\limits_{\Omega}|x|^2  d\mathbb P\Big{)}^{1/2}.
\end{equation*}
For $f\in C_b(\mathbb R, L^2(\mathbb P,H))$, the space of bounded continuous mappings from $\mathbb R$ to $L^2(\mathbb P,H)$, we denote
$||f||_{\infty}:=\sup\limits_{t\in\mathbb R}||f(t)||_2$.

Consider the following semi-linear stochastic differential equation
\begin{equation}\label{SlSDE}
dx(t)=(Ax(t)+f(t,x(t)))dt+g(t,x(t))dW(t), 
\end{equation}
where $A$ is an infinitesimal generator which generates a
$C_0$-semigroup $\{U(t)\}_{t\ge 0}$, $f,g: \mathbb R\times H\to H$ and $W(t)$ is a two-sided
standard one-dimensional Brownian motion defined on the probability space
$(\Omega,\mathcal F,\mathbb P)$.  We set $\mathcal F_{t}:=\sigma\{W(u):  u \le t\}$.

\begin{definition} \rm
Recall that an $\mathcal F_{t}$-adapted processes $\{x(t)\}_{t\in\mathbb R}$ is said to be
a mild solution of equation (\ref{SlSDE}) if it satisfies the stochastic integral equation
$$
x(t)=U(t-t_0)x(t_0)+\int_{t_0}^{t}U(t-s)f(s,x(s))ds+\int_{t_0}^{t}U(t-s)g(s,x(s))dW(s) ,
$$
for all $t\ge t_0$ and each $t_0\in \mathbb R$.
\end{definition}

\begin{remark}\label{remL1} \rm
If $\varphi \in C_{b}(\mathbb R,L^2(\mathbb
P,H))$, then for any $\psi \in H(\varphi)$ we have $||\psi(t)||_2 \le
||\varphi||_{\infty}$ for every $t\in\mathbb R$.
\end{remark}

Let $\mathcal P(H)$ be the space of all Borel probability measures on $H$ endowed
with the $\beta$ metric:
$$
\beta (\mu,\nu) :=\sup\left\{ \left| \int f d \mu - \int fd \nu\right|: ||f||_{BL} \le 1
\right\}, \quad \hbox{for }\mu,\nu\in \mathcal P(H),
$$
where $f$ are bounded Lipschitz continuous real-valued functions on $H$ with the norms
\[
||f||_{BL}= Lip(f) + ||f||_\infty,~ Lip(f)=\sup_{x\neq y}
\frac{|f(x)-f(y)|}{|x-y|},~ ||f||_{\infty}=\sup_{x\in H}|f(x)|.
\]
Recall that a sequence $\{\mu_n\}\subset \mathcal P(H)$ is
said to {\em weakly converge} to $\mu$ if $\int f d\mu_n\to \int f d\mu$
for all $f\in C_b(H)$, where $C_b(H)$ is the space
of all bounded continuous real-valued functions on $H$. It is well-known that
$(\mathcal P(H),\beta)$ is a separable complete metric space and that a sequence $\{\mu_n\}$
weakly converges to $\mu$ if and only if $\beta(\mu_n, \mu) \to0$ as $n\to\infty$.

\begin{definition}\label{aad}\rm
A sequence of random variables $\{x_n\}$ is said to \emph{converge in distribution} to the random variable $x$ if the corresponding laws
$\{\mu_n\}$ of $\{x_n\}$ weakly converge to the law $\mu$ of $x$, i.e. $\beta(\mu_n,\mu)\to 0$.
\end{definition}

\begin{definition} \rm
Let $\{\varphi (t)\}_{t\in\mathbb R}$ be a mild solution of equation \eqref{SlSDE}. Then $\varphi$ is called
{\em comparable} (respectively, {\em uniformly comparable}) {\em in distribution}
if $\mathfrak N_{(f,g)} \subseteq \tilde{\mathfrak N}_{\varphi}$ (respectively, $\mathfrak M_{(f,g)} \subseteq \tilde{\mathfrak M}_{\varphi}$),
where $\tilde{\mathfrak N}_{\varphi}$ (respectively, $\tilde{\mathfrak M}_{\varphi}$) means the set of all sequences $\{t_n\}\subset\mathbb R$
such that the sequence $\{\varphi(\cdot+t_n)\}$ converges to $\varphi(\cdot)$
(respectively, $\{\varphi(\cdot+t_n)\}$ converges) in distribution uniformly on any compact interval.
\end{definition}

In this section, we consider the following linear stochastic differential equation
\begin{equation}\label{eqLSDE1}
dx(t)=(Ax(t)+f(t))dt+g(t)dW(t), 
\end{equation}
where $A$ and $W$ are the same as in \eqref{SlSDE}, and $f,g\in C(\mathbb R,L^2(\mathbb P,H))$ are $\mathcal F_t$-adapted.

\begin{theorem}\label{thLS11_2}
Consider the equation \eqref{eqLSDE1}.
Suppose that the semigroup $\{U(t)\}_{t\ge 0}$ acting on $H$
is exponentially stable, then the following statements hold:
\begin{enumerate}
\item for every $f,g\in C_{b}(\mathbb R,L^2(\mathbb P,H))$
there exists a unique solution $\varphi \in C_{b}(\mathbb R,$ $L^2(\mathbb P,H))$ of equation
\eqref{eqLSDE1}
given by the formula
\begin{equation}\label{eqD3}
\varphi(t) =\int_{-\infty}^{t}U(t-\tau) f(\tau)d\tau +
 \int_{-\infty}^{t}U(t-\tau) g(\tau)dW(\tau);
\end{equation}

\item the Green's operator $\mathbb G$ defined by
\begin{equation*}\label{eqG10}
\mathbb G(f,g)(t):=\int_{-\infty}^{t}  U(t-\tau)f(\tau)d\tau + \int_{-\infty}^{t}U(t-\tau)g(\tau)dW(\tau)
\end{equation*}
is a bounded operator defined on $C_b(\mathbb R,L^2(\mathbb P,H))\times C_b(\mathbb R,L^2(\mathbb P,H))$
with values in $C_b(\mathbb R,L^2(\mathbb P,H))$
and
\begin{equation}\label{Ge}
||\mathbb G(f,g)||_{\infty}\le \frac{\mathcal N}{\nu}
\Big{(}2||f||^2_{\infty}+\nu ||g||^2_{\infty}\Big{)}^{1/2}
\end{equation}
for all $f,g\in C_b(\mathbb R, L^2(\mathbb P,H))$;

\item if $f,g\in C_{b}(\mathbb R,L^2(\mathbb P,H))$ and $l>L>0$, then
\begin{align}\label{eqL1}
\max\limits_{|t|\le L}\mathbb E|\varphi(t)|^2 & \le
 \frac{\mathcal N^2}{\nu^2}\big{(}2\max\limits_{|\tau|\le l}\mathbb E|f(\tau)|^2 +\nu\max\limits_{|\tau|\le
l}\mathbb E|g(\tau)|^2\big{)} \nonumber \\
&\qquad +
\frac{\mathcal N^2}{\nu^{2}} \big{(} 2 e^{-\nu(l-L)} ||f||^{2}_{\infty}+
\nu e^{-2\nu(l-L)} ||g||^{2}_{\infty}\big{)};
\end{align}

\item if $f,g\in C_{b}(\mathbb R,L^2(\mathbb P,H))$, then the unique
$L^2$-bounded solution $\varphi$ of equation \eqref{eqLSDE1} is uniformly comparable in distribution;

\item  $\mathfrak M_{(f,g)}^{u}\subseteq \mathfrak {\tilde{M}}_{\varphi}^{u}$, where ${\tilde{\mathfrak M}}_{\varphi}^{u}$ is the
set of all sequences $\{t_n\}$ such that the sequence $\{\varphi(t+t_n)\}$ converges in distribution uniformly in $t\in\mathbb R$.
\end{enumerate}
\end{theorem}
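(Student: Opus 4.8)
The plan is to target the final item, statement (v), and to reduce the uniform-in-$t$ convergence in distribution of the translates $\varphi(\cdot+t_n)$ to the deterministic $L^2$-bound already recorded in statement (ii). Fix a sequence $\{t_n\}\in\mathfrak M^{u}_{(f,g)}$ and let $\bar f,\bar g\in C_b(\mathbb R,L^2(\mathbb P,H))$ be its uniform limits, so that $||f^{t_n}-\bar f||_{\infty}\to 0$ and $||g^{t_n}-\bar g||_{\infty}\to 0$. Writing $\bar\varphi:=\mathbb G(\bar f,\bar g)$ for the bounded solution supplied by statement (i), the goal is to prove
\[
\sup_{t\in\mathbb R}\beta\bigl(\mathrm{Law}\,\varphi(t+t_n),\ \mathrm{Law}\,\bar\varphi(t)\bigr)\longrightarrow 0\qquad(n\to\infty),
\]
which is exactly the assertion $\{t_n\}\in\tilde{\mathfrak M}^{u}_{\varphi}$ (uniform convergence in distribution to a limit).

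The first step is a law identity for the shifted solution. Starting from formula \eqref{eqD3} and substituting $\tau=s+t_n$ in both the deterministic and the stochastic convolution, and setting $\tilde W_n(s):=W(s+t_n)-W(t_n)$, one obtains the almost sure identity
\[
\varphi(t+t_n)=\int_{-\infty}^{t}U(t-s)f^{t_n}(s)\,ds+\int_{-\infty}^{t}U(t-s)g^{t_n}(s)\,d\tilde W_n(s),
\]
where $\tilde W_n$ is again a two-sided standard Brownian motion and the integrands $f^{t_n},g^{t_n}$ are adapted to the shifted filtration $\mathcal F_{\cdot+t_n}$, with respect to which $\tilde W_n$ is a martingale. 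Thus $\varphi(\cdot+t_n)$ coincides in law with the bounded solution $\varphi_n:=\mathbb G(f^{t_n},g^{t_n})$ of \eqref{eqLSDE1} with coefficients $f^{t_n},g^{t_n}$ driven by a standard Brownian motion.

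The second step is purely quantitative and uses the linearity of $\mathbb G$ together with the global bound \eqref{Ge}. Realizing $\bar\varphi$ on the same probability space with the same noise $\tilde W_n$, the difference $\varphi_n-\bar\varphi=\mathbb G(f^{t_n}-\bar f,\ g^{t_n}-\bar g)$, whence \eqref{Ge} gives
\[
\sup_{t\in\mathbb R}||\varphi_n(t)-\bar\varphi(t)||_2\le\frac{\mathcal N}{\nu}\Bigl(2||f^{t_n}-\bar f||_{\infty}^{2}+\nu||g^{t_n}-\bar g||_{\infty}^{2}\Bigr)^{1/2}\longrightarrow 0 .
\]
Finally, for $H$-valued random variables $X,Y$ on a common space and any $h$ with $||h||_{BL}\le 1$ one has $Lip(h)\le 1$, so $|\mathbb E h(X)-\mathbb E h(Y)|\le\mathbb E|X-Y|\le ||X-Y||_2$, and hence $\beta(\mathrm{Law}\,X,\mathrm{Law}\,Y)\le ||X-Y||_2$. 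Combining this with the previous display and the law identity of Step 1 yields the claimed uniform convergence, so $\{t_n\}\in\tilde{\mathfrak M}^{u}_{\varphi}$; the uniformity in $t$ is inherited directly from the fact that \eqref{Ge} is global in $t$.

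The analytic core just described (linearity of $\mathbb G$, the elementary estimate $\beta\le||\cdot||_2$, and the bound \eqref{Ge} taken verbatim from statement (ii)) is routine. The step I expect to be the main obstacle is making the law-level manipulation of Step 1 fully rigorous: one must justify the change of variables in the It\^o integral under the time shift while tracking that the natural filtration moves from $\mathcal F_t$ to $\mathcal F_{t+t_n}$, and one must ensure that the coupling used to pass from $||\varphi_n-\bar\varphi||_2$ to the $\beta$-distance between $\mathrm{Law}\,\varphi(\cdot+t_n)$ and $\mathrm{Law}\,\bar\varphi$ is legitimate, i.e.\ that the shifted solution and the limiting solution can be realized with a common driving Brownian motion whose increments carry the correct stationary law and with respect to which $\bar f,\bar g$ are adapted. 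This is precisely where the stationarity of Brownian increments and the adaptedness structure of the coefficients enter, and it is the only place where more than bookkeeping is required.
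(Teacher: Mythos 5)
Your argument for item (v) is correct and is essentially the paper's own proof of that item: the same change of variables $\tau=s+t_n$ producing the shifted Brownian motion $\tilde W_n(s)=W(s+t_n)-W(t_n)$, the same identification in law of $\varphi(\cdot+t_n)$ with $\mathbb G(f^{t_n},g^{t_n})$, the same application of the global bound \eqref{Ge} to the difference $\mathbb G(f^{t_n}-\bar f,\,g^{t_n}-\bar g)$, and the same passage from uniform $L^2$-convergence to uniform convergence in distribution (the paper states this last step without the explicit inequality $\beta(\mathrm{Law}\,X,\mathrm{Law}\,Y)\le \|X-Y\|_2$, which is a nice small addition on your part). The adaptedness/coupling subtleties you flag at the end are real, but the paper treats them at exactly the same level of detail as you do, so this is not a point of divergence.

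The gap is one of coverage rather than of method: the statement to be proved is the full five-part theorem, and you have proved only part (v), taking parts (i) and (ii) as given inputs. Parts (i)--(iii) require genuine (if routine) work --- the variation-of-constants representation \eqref{eqD3}, uniqueness via the triviality of bounded solutions of $x'=Ax$, and the Cauchy--Schwarz and It\^o-isometry computations that yield \eqref{Ge} and the localized estimate \eqref{eqL1}. More importantly, part (iv) is not a consequence of your argument: for a sequence $\{t_n\}\in\mathfrak M_{(f,g)}$ (as opposed to $\mathfrak M^{u}_{(f,g)}$) the convergence $f^{t_n}\to\tilde f$, $g^{t_n}\to\tilde g$ is only uniform on compact intervals, so $\|f^{t_n}-\tilde f\|_{\infty}$ need not tend to zero and \eqref{Ge} gives nothing. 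The paper handles (iv) by applying the local estimate \eqref{eqL1} with a sequence $l_n\to\infty$ and invoking Remark \ref{remD1}-(iii), and some such localization is unavoidable there. So as a proof of the theorem as stated, the proposal is incomplete, even though the portion it does address coincides with the paper's route.
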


\begin{proof}
(i)-(ii). It is straightforward to verify that the function $\varphi$
given by \eqref{eqD3} is a solution of the equation \eqref{eqLSDE1}.
If $\psi\in C_{b}(\mathbb R,$ $L^2(\mathbb P,H))$ is also a
solution, then $u(t)=\varphi(t)-\psi(t)$ satisfies the equation
\[
x'(t)=Ax(t).
\]
But under the exponential stability condition of $\{U(t)\}_{t\ge
0}$, this equation has only trivial solution in $C_{b}(\mathbb R,$
$L^2(\mathbb P,H))$. This enforces that $\varphi=\psi$.

We now show the boundedness of $\varphi$. Note that $\varphi(t)=p(t)+q(t)$ for $t\in\mathbb R$, where
\begin{equation*}\label{eqD4}
p(t):= \int
_{-\infty}^{t}U(t-\tau)f(\tau)d\tau
\end{equation*}
and
\begin{equation*}
 q(t):= \int
_{-\infty}^{t}U(t-\tau)g(\tau)dW(\tau).
\end{equation*}
For the first term, by the Cauchy-Schwarz inequality we have
\begin{equation}
\label{eqB19}
\begin{split}
\mathbb E|p(t)|^2 & =\mathbb E \left|\int_{-\infty}^{t} U(t-\tau)f(\tau)d\tau \right|^2 \\
&\le \mathbb E\left[\int_{-\infty}^{t}||U(t-\tau)||\cdot|f(\tau)|d\tau\right]^2
\le \mathbb E\left[\int_{-\infty}^{t} \mathcal N e^{-\nu (t-\tau)}
|f(\tau)|d\tau\right]^2    \\
& \le \mathcal N^2 \int_{-\infty}^{t} e^{-\nu (t-\tau)}d\tau
\int_{-\infty}^{t} e^{-\nu (t-\tau)}\mathbb E |f(\tau)|^2 d\tau   \\
&  \le  \frac{\mathcal N^2}{\nu^2}||f||_{\infty}^2.
\end{split}
\end{equation}
For the second term, using It\^o's isometry property we get
\begin{align}\label{eqB29}
\mathbb E|q(t)|^2 &= \mathbb E\left|\int
_{-\infty}^{t}U(t-\tau)g(\tau)d W(\tau) \right|^2 = \int
_{-\infty}^{t}\mathbb E |U(t-\tau)g(\tau)|^2d\tau \nonumber \\
&\le \int _{-\infty}^{t}\mathcal N^2e^{-2\nu (t-\tau)}\mathbb E
|g(\tau)|^2d\tau \le ||g||_{\infty}^2\int
_{-\infty}^{t}\mathcal N^2e^{-2\nu (t-\tau)}d\tau\nonumber\\
 &=
\frac{\mathcal N^2}{2\nu}||g||_{\infty}^2.
\end{align}
From (\ref{eqB19}) and (\ref{eqB29}) we have
\begin{align*}\label{eqB3}
 \mathbb E|\varphi(t)|^2 \le 2(\mathbb
E|p(t)|^2+\mathbb E|q(t)|^2) \le
 \frac{\mathcal
N^2}{\nu^{2}}\big{(}2||f||^2_{\infty}+\nu ||g||^2_{\infty}\big{)},
\end{align*}
and consequently
\begin{equation*}\label{eqB4}
||\mathbb G(f,g)||_{\infty}= ||\varphi||_{\infty}\le \frac{\mathcal N}{\nu}
\Big{(}2||f||^2_{\infty}+\nu ||g||^2_{\infty}\Big{)}^{1/2}.
\end{equation*}

(iii).  Let $L>0$, $t\in
[-L,L]$, $l>L$ and $f,g\in C_{b}(\mathbb R,L^{2}(\mathbb P,H))$,
then from (\ref{eqB19}), (\ref{eqB29}) we have
\begin{align}\label{eqBL1}
 \mathbb E|\varphi(t)|^2 & \le 2(\mathbb E|p(t)|^2+\mathbb
E|q(t)|^2) \nonumber\\
& \le 2\mathcal N^2
\left(\frac{1}{\nu}\int_{-\infty}^{t}e^{-\nu(t-\tau)}\mathbb
E|f(\tau)|^2d\tau + \int_{-\infty}^{t}e^{-2\nu(t-\tau)}\mathbb
E|g(\tau)|^2d\tau\right).
\end{align}
Note that
\begin{align}\label{eqBL2}
\int_{-\infty}^{t}e^{-\nu(t-\tau)}\mathbb E|f(\tau)|^2d\tau
& = \int_{-l}^t e^{-\nu(t-\tau)}\mathbb
E|f(\tau)|^2d\tau + \int_{-\infty}^{-l} e^{-\nu(t-\tau)}\mathbb
E|f(\tau)|^2d\tau \nonumber \\
&   \le \frac{1}{\nu}\max\limits_{|\tau|\le
l}\mathbb E|f(\tau)|^2 + \frac{1}{\nu}e^{-\nu (t+l)}||f||^{2}_{\infty}.
\end{align}
Reasoning as above we have
\begin{equation}\label{eqBL4}
 \int_{-\infty}^{t}e^{-2\nu(t-\tau)}\mathbb E|g(\tau)|^2d\tau
\le \frac{1}{2\nu}\max\limits_{|\tau|\le l}\mathbb E|g(\tau)|^2 +
\frac{1}{2\nu}e^{-2\nu (t+l)}||g||^{2}_{\infty} .
\end{equation}
From (\ref{eqBL1})-(\ref{eqBL4}) we obtain
\begin{align*}
\max\limits_{|t|\le L}\mathbb E|\varphi(t)|^2 & \le
 \frac{\mathcal N^2}{\nu^{2}}\big{(}2\max\limits_{|\tau|\le l}\mathbb E|f(\tau)|^2 +\nu \max\limits_{|\tau|\le
l}\mathbb E|g(\tau)|^2\big{)} \\
&\qquad + \frac{\mathcal N^2}{\nu^{2}} \big{(} 2 e^{-\nu(l-L)} ||f||^{2}_{\infty}+
\nu e^{-2\nu(l-L)} ||g||^{2}_{\infty}\big{)}.
\end{align*}
Thus inequality (\ref{eqL1}) is established.

(iv). Let now $\{t_n\}\in \mathfrak M_{(f,g)}$, then there exists
$(\tilde{f},\tilde{g})\in H(f,g)$ such that $f^{t_n}\to \tilde{f}$
and $g^{t_n}\to \tilde{g}$ in the space
$C(\mathbb R, L^{2}(\mathbb P,H))$ as $n\to \infty$; that is, for any $L>0$ we have
\begin{equation*}\label{eqD6}
\max\limits_{|t|\le L}\mathbb E|f(t+t_n)-\tilde{f}(t)|^2\to 0\ \
\mbox{and}\ \ \max\limits_{|t|\le L}\mathbb
E|g(t+t_n)-\tilde{g}(t)|^2\to 0
\end{equation*}
as $n\to \infty$.

Denote by $h^{1}_{n}(t):=f^{t_n}(t)-\tilde{f}(t)$ and
$h^{2}_{n}(t):=g^{t_n}(t)-\tilde{g}(t)$ for any $t\in\mathbb R$,
$\varphi_{n}:=\mathbb G(f^{t_n},g^{t_n})$, $\tilde{\varphi}:=\mathbb
G(\tilde{f},\tilde{g})$ and $\psi_{n}:=\varphi_{n}-\tilde{\varphi}$.
It is easy to check that $\psi_{n}=\mathbb G(h^{1}_{n},h^{2}_{n})$,
$h^{i}_{n}\in C_{b}(\mathbb R,L^{2}(\mathbb R,H))$ ($i=1,2$) and by
Remark \ref{remL1} we have $||h_{n}^{1}||_{\infty}\le
2||f||_{\infty}$ (respectively, $||h_{n}^{2}||_{\infty}\le
2||g||_{\infty}$). Let now $\{l_n\}$ be a sequence of positive
numbers such that $l_n\to \infty$ as $n\to \infty$. According to
inequality (\ref{eqL1}) we obtain
\begin{align}\label{eqBL6}
\max\limits_{|t|\le L}\mathbb E|\psi_{n}(t)|^2 & \le
 \frac{\mathcal N^2}{\nu^2}\big{(}2\max\limits_{|\tau|\le l_n}\mathbb E|h_{n}^{1}(\tau)|^2 +\nu \max\limits_{|\tau|\le
l_n}\mathbb E|h_n^{2}(\tau)|^2\big{)} \nonumber \\
& \qquad + \frac{\mathcal N^2}{\nu^2} \big{(}2 e^{-\nu(l_n-L)} ||h_n^1||^{2}_{\infty}+
\nu e^{-2\nu(l_n-L)} ||h_n^2||^{2}_{\infty}\big{)}.
\end{align}
Passing to limit in (\ref{eqBL6}) as $n\to \infty$ and taking into
consideration Remark \ref{remD1}-(iii) we have
\begin{equation*}\label{eqBL8}
\lim\limits_{n\to \infty}\max\limits_{|t|\le L}\mathbb
E|\psi_{n}(t)|^2=0
\end{equation*}
for any $L>0$; that is, $\varphi_{n}\to \tilde{\varphi}$ in the space $C(\mathbb R,L^{2}(\mathbb P,H))$ as $n\to
\infty$.

Since $L^2$ convergence implies convergence in distribution, we have
$\varphi_n(t)\to \tilde{\varphi}(t)$ in distribution uniformly in
$t\in [-L,L]$ for all $L>0$. On the other hand we have
$$
\varphi(t+t_n)= \int _{-\infty}^{t} U (t-\tau)f(\tau +t_n)d\tau
+\int _{-\infty}^{t} U(t-\tau)g(\tau +t_n)d\tilde{W}_n(\tau),
$$
where $\tilde{W}_n(\tau):=W(\tau +t_n)-W(t_n)$ is a shifted Brownian motion.
So $\varphi_n(t)$ and $\varphi(t+t_n)$ share the same distribution on $H$, and hence
$\varphi(t+t_n)\to \tilde{\varphi}(t)$ in distribution uniformly in
$t\in [-L,L]$ for all $L>0$. Thus we have $\{t_n\}\in \mathfrak
{\tilde{M}}_{\varphi}$. That is, $\varphi$ is uniformly comparable in distribution.

(v). Let $\{t_n\}\in \mathfrak M_{(f,g)}^{u}$, then there
exists $(\tilde{f},\tilde{g})\in H(f,g)$ such that $f^{t_n}\to
\tilde{f}$ and $g^{t_n}\to \tilde{g}$ uniformly in
$t\in \mathbb R$ as $n\to \infty$, that is,
\begin{equation*}\label{eqPP1}
\max\limits_{t\in \mathbb R}\mathbb E|f(t+t_n)-\tilde{f}(t)|^2\to 0\
\  \mbox{and}\ \ \max\limits_{t\in \mathbb R}\mathbb
E|g(t+t_n)-\tilde{g}(t)|^2\to 0
\end{equation*}
as $n\to \infty$.
As above we denote by $h^{1}_{n}(t):=f^{t_n}(t)-\tilde{f}(t)$ and
$h^{2}_{n}(t):=g^{t_n}(t)-\tilde{g}(t)$ for $t\in\mathbb R$,
$\varphi_{n}:=\mathbb G(f^{t_n},g^{t_n})$, $\tilde{\varphi}:=\mathbb
G(\tilde{f},\tilde{g})$ and $\psi_{n}:=\varphi_{n}-\tilde{\varphi}$.

According to inequality (\ref{Ge}) we obtain
\begin{eqnarray}\label{eqBL6_1}
||\psi_{n}||_{\infty}\le
 \frac{\mathcal N }{\nu}\big{(}2||h_{n}^{1}||^2_{\infty} +\nu ||h_n^{2}||^2_{\infty}\big{)}^{1/2}.
\end{eqnarray}
Passing to limit in (\ref{eqBL6_1}) we obtain $\varphi_{n}\to
\tilde{\varphi}$ uniformly on $\mathbb R$ in $L^2$-norm as
$n\to\infty$. Since $\varphi_{n}(t)$ and $\varphi(t+t_n)$ have the
same distributions,  $\varphi(t+t_n)\to \tilde{\varphi}(t)$ in
distribution uniformly in $t\in \mathbb R$. Thus we have $\{t_n\}\in
\mathfrak {\tilde{M}}^{u}_{\varphi}$. The proof is complete.
\end{proof}

\begin{coro}\label{cor 10}
Under the conditions of \textsl{Theorem} \ref{thLS11_2} if the functions
$f,g\in$ $ C_{b}(\mathbb R,$ $L^2(\mathbb P,H))$ are jointly stationary (respectively,
$\tau$-periodic, quasi-periodic with the spectrum of frequencies $\nu_1,\ldots,\nu_k$,
Bohr almost periodic, Bohr almost automorphic, Birkhoff recurrent, Lagrange stable,
Levitan almost periodic, almost recurrent, Poisson stable), then equation (\ref{eqLSDE1})
has a unique solution $\varphi \in C_{b}(\mathbb R,L^2(\mathbb P,H))$ which is
stationary (respectively,
$\tau$-periodic, quasi-periodic with the spectrum of frequencies $\nu_1,\ldots,\nu_k$,
Bohr almost periodic, Bohr almost automorphic, Birkhoff recurrent, Lagrange stable,
Levitan almost periodic, almost recurrent, Poisson stable) in distribution. If the functions
$f,g\in$ $ C_{b}(\mathbb R,$ $L^2(\mathbb P,H))$ are jointly Lagrange stable and jointly pseudo-periodic (respectively,
pseudo-recurrent), then equation (\ref{eqLSDE1})
has a unique solution $\varphi \in C_{b}(\mathbb R,L^2(\mathbb P,H))$ which is
pseudo-periodic (respectively, pseudo-recurrent) in distribution.
\end{coro}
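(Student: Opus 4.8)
The plan is to prove Corollary \ref{cor 10} by reducing each recurrence property of the solution $\varphi$ to the corresponding property of the pair $(f,g)$ via the comparability framework established in Theorem \ref{thLS11_2}, and then invoking Shcherbakov's transference theorem (Theorem \ref{th1}). The key observation is that Theorem \ref{thLS11_2}(iv)--(v) already supply the crucial inclusions of sequence sets: part (iv) gives $\mathfrak M_{(f,g)}\subseteq \tilde{\mathfrak M}_{\varphi}$ (uniform comparability in distribution), and part (v) gives $\mathfrak M_{(f,g)}^{u}\subseteq \tilde{\mathfrak M}_{\varphi}^{u}$. Since convergence in distribution uniformly on compact intervals is precisely the notion of convergence governing the distribution-sense recurrence of $\varphi$, these inclusions say exactly that $\varphi$ is (uniformly) comparable by character of recurrence with $(f,g)$, but now in the distributional topology rather than the norm topology of the value space.

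First I would make precise the sense in which the distributions of $\varphi$ form a continuous trajectory in the metric space $(\mathcal P(H),\beta)$: associate to $\varphi$ the map $t\mapsto \mathrm{Law}(\varphi(t))\in\mathcal P(H)$, so that the sets $\tilde{\mathfrak N}_{\varphi}$, $\tilde{\mathfrak M}_{\varphi}$, $\tilde{\mathfrak N}_{\varphi}^{u}$, $\tilde{\mathfrak M}_{\varphi}^{u}$ become the usual sets $\mathfrak N$, $\mathfrak M$ attached to this $\mathcal P(H)$-valued curve in the sense of Section 2. With this identification, the inclusions from Theorem \ref{thLS11_2} become literal hypotheses of Theorem \ref{th1}: the curve of laws is comparable (resp. uniformly comparable) by character of recurrence with $(f,g)$. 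I would then apply Theorem \ref{th1}(ii) for the classes requiring only comparability (stationary, $\tau$-periodic, Levitan almost periodic, almost recurrent, Poisson stable), and Theorem \ref{th1}(iii) for the classes requiring uniform comparability (quasi-periodic, Bohr almost periodic, Bohr almost automorphic, Birkhoff recurrent, Lagrange stable), concluding that the law-curve inherits each respective property. Translated back, this means $\varphi$ has the named property \emph{in distribution}.

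For the final sentence of the corollary, concerning pseudo-periodicity and pseudo-recurrence, I would invoke Theorem \ref{th1}(iv), which additionally requires $(f,g)$ to be Lagrange stable — hence the explicit hypothesis ``jointly Lagrange stable'' in the statement. Here the uniform inclusion from part (v), namely $\mathfrak M_{(f,g)}^{u}\subseteq \tilde{\mathfrak M}_{\varphi}^{u}$, is what is needed, since pseudo-periodicity is characterized through sequences in $\mathfrak N^{u}$ converging to $\pm\infty$ (see the Remark following Definition \ref{defPR}), and uniform comparability transfers this property. Uniqueness of $\varphi$ in $C_b(\mathbb R,L^2(\mathbb P,H))$ is already furnished by Theorem \ref{thLS11_2}(i), so no further argument is needed for that.

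The main obstacle I anticipate is purely bookkeeping rather than conceptual: one must verify that the abstract comparability machinery of Theorem \ref{th1}, originally formulated for functions valued in a complete metric space, applies verbatim when the ``value space'' is $(\mathcal P(H),\beta)$ and the function is the law-curve $t\mapsto\mathrm{Law}(\varphi(t))$. This requires checking that $t\mapsto\mathrm{Law}(\varphi(t))$ is genuinely continuous into $(\mathcal P(H),\beta)$ — which follows from continuity of $\varphi$ into $L^2(\mathbb P,H)$ together with the fact that $L^2$-convergence implies convergence in distribution, i.e. $\beta$-convergence — and that the two notions of ``uniform convergence'' (uniform on compacts versus uniform on $\mathbb R$) match up between the $L^2$ setting used in the proof of Theorem \ref{thLS11_2} and the $\beta$-metric setting required by Theorem \ref{th1}. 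Once this compatibility is recorded, the corollary follows by a single clean application of the transference theorem to each listed class.
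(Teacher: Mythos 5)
Your proposal is correct and follows essentially the same route as the paper: the authors' proof of this corollary is literally the one-line remark that it follows from Theorems \ref{th1} and \ref{thLS11_2}, and your argument is exactly the intended expansion — viewing the law-curve $t\mapsto\mathrm{Law}(\varphi(t))$ as a continuous function into $(\mathcal P(H),\beta)$, reading the inclusions of Theorem \ref{thLS11_2}(iv)--(v) as (uniform) comparability, and applying the appropriate items of Theorem \ref{th1}, with item (iv) handling the Lagrange-stable pseudo-periodic/pseudo-recurrent cases.
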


\begin{proof}
This statement follows from Theorems \ref{th1} and \ref{thLS11_2}.
\end{proof}

\section{Semi-linear equations}


Let us consider the stochastic differential equation
\begin{equation}\label{eq3.4.4}
dx(t)=(Ax(t)+F(t,x(t)))dt + G(t,x(t))dW(t),
\end{equation}
where $F,G\in C(\mathbb{R}\times  H, H)$.

\begin{definition}\label{defL1} \rm
We say that the functions $F$
and $G$ satisfy the condition
\begin{enumerate}

\item[(C1)] if there exists a number $A_0\ge 0$ such that $|F(t,0)|, |G(t,0|\le
A_0$ for any $t\in \mathbb R$;

\item[(C2)] if there exists a number $\mathcal L\ge 0$ such that $Lip(F), Lip(G)\le
\mathcal L$, where
$$
Lip(F):=\sup\left\{\frac{|F(t,x_1)-F(t,x_2)|}{|x_1-x_2|}:\ x_1\not= x_2,\
t\in\mathbb R\right\};
$$
\item[(C3)] if $F$ and $G$ are continuous in $t$ uniformly w.r.t. $x$ on each bounded subset $Q\subset H$.
\end{enumerate}
\end{definition}

\begin{remark}\label{remL10} \rm
\begin{enumerate}
\item If $F$ and $G$ satisfy (C1)-(C2) with the constants $A_0$
and $\mathcal L$, then every pair of functions
$(\tilde{F},\tilde{G})$ in $H(F,G):= \overline{\{(F^{\tau},G^{\tau}): \tau\in \mathbb R\}}$, the hull of $(F,G)$, also posses the same
property with the same constants.

\item If $F$ and $G$ satisfy the conditions
(C1)-(C3), then $F,G\in BUC (\mathbb R\times H, H)$ and
$H(F,G)\subset BUC (\mathbb R\times H, H) \times BUC (\mathbb
R\times H, H)$.

\item When we consider stochastic ordinary differential
equations, i.e. $F,G\in C(\mathbb{R}\times  \mathbb R^d, \mathbb
R^d)$, the condition (C3) naturally holds as pointed out in Remark
\ref{remQ1}; but for stochastic partial differential equations, we
need to check condition (C3) carefully.
\end{enumerate}
\end{remark}

\begin{lemma}\label{lB} Let $u,f\in C_{b}(\mathbb R,\mathbb
R_{+})$ and $\nu >\alpha \ge 0$, then the following statements hold:
\begin{enumerate}
\item if
\begin{equation*}\label{eqB1}
u(t)\le \int_{-\infty}^{t}e^{-\nu (t-\tau)}(\alpha u(\tau) +
f(\tau))d\tau
\end{equation*}
for any $t\in \mathbb R$, then
\begin{equation*}\label{eqB2}
u(t)\le \int_{-\infty}^{t}e^{-k(t-\tau)}f(\tau)d\tau,
\end{equation*}
where $k:=\nu -\alpha$;
\item if $l>L>0$, then
\begin{equation*}\label{eqB3}
\max\limits_{|t|\le L}u(t)\le
\frac{e^{kL}e^{-kl}}{k}\sup\limits_{t\in\mathbb
R}f(t)+\frac{1-e^{-kL}e^{-kl}}{k} \max\limits_{|t|\le l}f(t).
\end{equation*}
\end{enumerate}
\end{lemma}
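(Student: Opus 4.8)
The plan is to treat Lemma \ref{lB}(i) as a Gronwall-type comparison inequality and then obtain (ii) by an elementary splitting of the resulting convolution integral.

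For part (i), first I would introduce the right-hand side as an auxiliary function
\[
v(t):=\int_{-\infty}^{t}e^{-\nu(t-\tau)}\big(\alpha u(\tau)+f(\tau)\big)\,d\tau,
\]
so that the hypothesis reads $u(t)\le v(t)$ for all $t$. Writing $v(t)=e^{-\nu t}\int_{-\infty}^{t}e^{\nu\tau}(\alpha u(\tau)+f(\tau))\,d\tau$ exhibits $v$ as a product of differentiable factors (the integrand is continuous and the integral converges since $\alpha u+f$ is bounded and $\nu>0$), so $v$ is differentiable with $v'(t)=-\nu v(t)+\alpha u(t)+f(t)$. Using $u\le v$ and $\alpha\ge 0$ gives the differential inequality $v'(t)\le-\nu v(t)+\alpha v(t)+f(t)=-kv(t)+f(t)$ with $k=\nu-\alpha>0$. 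Multiplying by the integrating factor $e^{kt}$ yields $\frac{d}{dt}\big(e^{kt}v(t)\big)\le e^{kt}f(t)$. Since $u,f$ are bounded, so is $v$, and because $k>0$ the boundary term $e^{ks}v(s)\to0$ as $s\to-\infty$; integrating from $-\infty$ to $t$ then gives $e^{kt}v(t)\le\int_{-\infty}^{t}e^{ks}f(s)\,ds$, that is $u(t)\le v(t)\le\int_{-\infty}^{t}e^{-k(t-\tau)}f(\tau)\,d\tau$, which is the claim.

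For part (ii), I would start from the conclusion of (i) and split the integral at $-l$:
\[
u(t)\le\int_{-l}^{t}e^{-k(t-\tau)}f(\tau)\,d\tau+\int_{-\infty}^{-l}e^{-k(t-\tau)}f(\tau)\,d\tau.
\]
Fix $t$ with $|t|\le L$ and $l>L$. On the first integral $\tau$ ranges in $[-l,t]\subset[-l,l]$, so $f(\tau)\le\max_{|\tau|\le l}f(\tau)$; evaluating $\int_{-l}^{t}e^{-k(t-\tau)}\,d\tau=\frac1k(1-e^{-k(t+l)})$ and using $t+l\le L+l$ bounds this contribution by $\frac{1-e^{-kL}e^{-kl}}{k}\max_{|\tau|\le l}f(\tau)$. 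On the second integral one bounds $f(\tau)\le\sup_{t}f(t)$ and computes $\int_{-\infty}^{-l}e^{-k(t-\tau)}\,d\tau=\frac1k e^{-k(t+l)}$; here $t+l\ge l-L$, so this contribution is at most $\frac{e^{kL}e^{-kl}}{k}\sup_{t}f(t)$. Adding the two estimates and taking the maximum over $|t|\le L$ gives the asserted inequality.

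The only genuinely delicate point is the comparison step in (i): since $u$ is merely continuous (not assumed differentiable), I avoid differentiating $u$ directly and instead differentiate the smoother majorant $v$, which is where the hypotheses $\alpha\ge0$ and $\nu>\alpha$ enter (guaranteeing $k>0$, so that the integrating-factor argument closes and the boundary term at $-\infty$ vanishes). An equivalent route, should the differentiability bookkeeping prove awkward, is to iterate $u\le Tu+F$ for the positive contraction $(Tv)(t)=\alpha\int_{-\infty}^{t}e^{-\nu(t-\tau)}v(\tau)\,d\tau$ (which has norm $\alpha/\nu<1$ on $C_b$) and identify $\sum_{j\ge0}T^{j}F$ with the bounded solution of $w'+kw=f$; both routes lead to the same formula. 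Part (ii) is then pure bookkeeping.
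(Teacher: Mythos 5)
Your proof is correct, and part (i) takes a genuinely different route from the paper's. The paper sets up the \emph{equation} $v(t)=\int_{-\infty}^{t}e^{-\nu(t-\tau)}(\alpha v(\tau)+f(\tau))\,d\tau$, shows via the contraction mapping principle that it has a unique solution in $C_b(\mathbb R,\mathbb R)$, identifies that solution explicitly as $\int_{-\infty}^{t}e^{-k(t-\tau)}f(\tau)\,d\tau$ by observing it solves $v'=-kv+f$, and then invokes an abstract comparison theorem for positive operators on ordered Banach spaces (Daletskii--Krein, Ch.\ I, Theorem 9.3) to conclude $u\le v$ from $u\le\mathcal Au+F$ with $\mathcal A$ a cone-preserving contraction. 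You instead define $v$ directly as the right-hand side of the hypothesized \emph{inequality} (so $u\le v$ is immediate), differentiate this majorant, and close a Gronwall-type loop with an integrating factor; the hypotheses $\alpha\ge0$ and $k=\nu-\alpha>0$ enter exactly where they should (monotonicity of the substitution $u\le v$, and vanishing of the boundary term at $-\infty$). Your argument is more elementary and self-contained, avoiding the external positivity/comparison theorem; the paper's argument is the one that generalizes verbatim to abstract cones, and your sketched fallback (iterating the positive contraction and summing the Neumann series) is essentially the proof of the theorem the paper cites. Part (ii) in your write-up is identical to the paper's: split the convolution at $-l$, bound $f$ by its supremum on the tail and by $\max_{|\tau|\le l}f$ on $[-l,t]$, and estimate the exponential factors using $l-L\le t+l\le L+l$ for $|t|\le L$.
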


\begin{proof}
(i). Consider the equation
\begin{equation}\label{eqB4}
v(t)= \int_{-\infty}^{t}e^{-\nu (t-\tau)}(\alpha v(\tau) +
f(\tau))d\tau.
\end{equation}
Note that the linear operator $ \mathcal A: C_{b}(\mathbb R,\mathbb
R)\to  C_{b}(\mathbb R,\mathbb R)$ defined by
$$
(\mathcal A\varphi)(t):= \int_{-\infty}^{t}e^{-\nu (t-\tau)}\alpha
\varphi(\tau)d\tau \
$$
is a contraction, where $C_{b}(\mathbb R,\mathbb R)$ is equipped
with the norm $||\varphi||_\infty:=\sup\{|\varphi(t)|:\ t\in\mathbb R\}$.
In fact, it is immediate to check that $||\mathcal A||\le
\frac{\alpha}{\nu}<1$, $C_{b}(\mathbb R,\mathbb R_{+})$ is a cone in
the space $C_{b}(\mathbb R,\mathbb R)$ and $\mathcal A
(C_{b}(\mathbb R,\mathbb R_{+}))\subseteq C_{b}(\mathbb R,\mathbb
R_{+})$. Thus the operator $\Phi : C_{b}(\mathbb R,\mathbb R)\to
C_{b}(\mathbb R,\mathbb R)$, defined by
\[
(\Phi
\phi)(t):=\int_{-\infty}^{t}e^{-\nu (t-\tau)}(\alpha \phi(\tau) +
f(\tau))d\tau,\quad\hbox{for } t\in\mathbb R
\]
is a contraction and consequently the equation (\ref{eqB4}) has a unique solution on the
space $C_{b}(\mathbb R,\mathbb R)$.

Note that the unique bounded solution $v(t)$ of equation (\ref{eqB4}) is a solution of the equation
\begin{equation*}\label{eqB5}
v'(t)=-kv(t)+f(t)
\end{equation*}
and consequently it is given by
\begin{equation*}\label{eqB6}
v(t)= \int_{-\infty}^{t}e^{-k(t-\tau)}f(\tau)d\tau.
\end{equation*}

Since $C_{b}(\mathbb R,\mathbb R_{+})$ is a cone in the space
$C_{b}(\mathbb R,\mathbb R)$ and $\mathcal A (C_{b}(\mathbb
R,\mathbb R_{+}))\subseteq C_{b}(\mathbb R,\mathbb R_{+})$, to
finish the proof of the first statement we note that by
\cite[ChI, Theorem 9.3]{Dal} we have $u(t)\le v(t)$ for all $t\in\mathbb R$.

(ii). Let now $l>L>0$ and $t\in [-L,L]$, then we have
\begin{align*}
\int_{-\infty}^{t}e^{-k(t-\tau)}f(\tau)d\tau &=
\int_{-\infty}^{-l}e^{-k(t-\tau)}f(\tau)d\tau
+\int_{-l}^{t}e^{-k(t-\tau)}f(\tau)d\tau  \nonumber \\
& \le \sup\limits_{t\in\mathbb R}f(t)\, \cdot
\,\frac{e^{-k(t+l)}}{k} +\max\limits_{|t|\le
l}f(t)\cdot\frac{1-e^{-k(t+l)}}{k}.
\end{align*}
Consequently,
\begin{equation*}\label{eqB8}
\max\limits_{|t|\le L}u(t)\le \max\limits_{|t|\le L}v(t) \le
\frac{e^{kL}e^{-kl}}{k}\sup\limits_{t\in\mathbb
R}f(t)+\frac{1-e^{-kL}e^{-kl}}{k} \max\limits_{|t|\le l}f(t).
\end{equation*}
\end{proof}

\begin{prop}\label{Lp}
Consider the equation \eqref{eq3.4.4}. Suppose that the following conditions hold:
\begin{enumerate}
\item the semigroup $\{U(t)\}_{t\ge 0}$ acting on the space $H$
is exponentially stable;
\item $F,G\in C(\mathbb R\times  H,  H)$;
\item the functions $F$ and $G$ satisfy the conditions (C1) and
(C2).
\end{enumerate}
For $p> 2$, denote
\[
c_p := \left[ \frac{p(p-1)}{2}\cdot \left(\frac{p}{p-1}\right)^{p-2}  \right]^{p/2}.
\]
If
\[
\theta_p:= 2^{p-1}\mathcal N^p \mathcal L^p \left[ \left(\frac{2(p-1)}{\nu p}\right)^{p-1}
+ c_p  \left(\frac{p-2}{\nu p} \right)^{p/2-1} \right]
\cdot\frac{2}{\nu p} <1,
\]
then \eqref{eq3.4.4} admits a unique bounded solution in $C_b(\mathbb R,L^p(\mathbb P,H))$.
\end{prop}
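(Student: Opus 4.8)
The plan is to obtain the bounded solution as the unique fixed point of the integral (variation-of-constants) operator
\[
(\mathbb T x)(t) := \int_{-\infty}^t U(t-\tau)F(\tau,x(\tau))\,d\tau + \int_{-\infty}^t U(t-\tau)G(\tau,x(\tau))\,dW(\tau)
\]
on the Banach space $C_b(\mathbb R,L^p(\mathbb P,H))$ with norm $\|x\|:=\sup_t(\mathbb E|x(t)|^p)^{1/p}$, and to apply the Banach fixed point theorem. The constant $\theta_p$ is engineered to be precisely the $p$-th power of the Lipschitz constant of $\mathbb T$, so that the hypothesis $\theta_p<1$ gives $\mathrm{Lip}(\mathbb T)=\theta_p^{1/p}<1$ and hence a contraction.

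First I would verify that $\mathbb T$ maps the space into itself. Using (C1) in the form $|F(\tau,x)|\le A_0+\mathcal L|x|$ (and likewise for $G$) together with the exponential bound $\|U(s)\|\le\mathcal N e^{-\nu s}$, the same moment estimates used below (with $A_0$ replacing the Lipschitz increments) show $\sup_t\mathbb E|(\mathbb T x)(t)|^p<\infty$. Continuity of $t\mapsto(\mathbb T x)(t)$ in $L^p$ follows from strong continuity of $\{U(t)\}_{t\ge0}$ and dominated convergence for the drift, and the analogous property of the stochastic convolution; $\mathcal F_t$-adaptedness is immediate since $x$ is adapted and the integrals run up to $t$.

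The heart of the argument is the contraction estimate. Writing $u:=x-y$ and $(\mathbb T x-\mathbb T y)(t)=p(t)+q(t)$, with $p$ the drift convolution and $q$ the stochastic convolution, I would estimate the two pieces separately and combine via $|a+b|^p\le 2^{p-1}(|a|^p+|b|^p)$. For the drift, (C2) and exponential stability give $|p(t)|\le\mathcal N\mathcal L\int_0^\infty e^{-\nu s}|u(t-s)|\,ds$; Hölder's inequality with exponents $p/(p-1)$ and $p$ together with the symmetric splitting $e^{-\nu s}=e^{-\nu s/2}e^{-\nu s/2}$ yields
\[
\mathbb E|p(t)|^p\le \mathcal N^p\mathcal L^p\Big(\frac{2(p-1)}{\nu p}\Big)^{p-1}\frac{2}{\nu p}\,\|u\|^p .
\]
For the diffusion, since $W$ is one-dimensional the process $r\mapsto\int_{-\infty}^r U(t-\tau)[G(\tau,x)-G(\tau,y)]\,dW(\tau)$ is a genuine martingale, so a $p$-th moment (Burkholder--Davis--Gundy type) inequality $\mathbb E|\int_{-\infty}^t\Phi\,dW|^p\le c_p\,\mathbb E(\int_{-\infty}^t|\Phi|^2\,d\tau)^{p/2}$, with the stated constant $c_p$, applies and gives $\mathbb E|q(t)|^p\le c_p\,\mathbb E(\int_0^\infty\mathcal N^2 e^{-2\nu s}\mathcal L^2|u(t-s)|^2\,ds)^{p/2}$; a second Hölder step with exponents $p/(p-2)$ and $p/2$ and the splitting $e^{-2\nu s}=e^{-\nu s}e^{-\nu s}$ produces
\[
\mathbb E|q(t)|^p\le c_p\,\mathcal N^p\mathcal L^p\Big(\frac{p-2}{\nu p}\Big)^{p/2-1}\frac{2}{\nu p}\,\|u\|^p .
\]
Adding these and taking $\sup_t$ yields $\|\mathbb T x-\mathbb T y\|^p\le\theta_p\|x-y\|^p$, so $\mathbb T$ has a unique fixed point.

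Finally I would observe that the fixed point is a bounded mild solution and, conversely, that every bounded mild solution is a fixed point: letting the initial time $t_0\to-\infty$ in the mild-solution identity, the term $U(t-t_0)x(t_0)\to0$ in $L^p$ by exponential stability and boundedness of $x$, so $x=\mathbb T x$; uniqueness of the fixed point then gives uniqueness of the bounded solution. The main obstacle is arranging the two successive Hölder estimates—with the particular symmetric exponential splittings—so that all constants coalesce exactly into $\theta_p$, and invoking the moment inequality in its correct one-dimensional martingale form for the stochastic convolution; by comparison the into-itself and continuity verifications are routine.
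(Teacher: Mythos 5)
Your proposal is correct and follows essentially the same route as the paper: the same variation-of-constants operator on $C_b(\mathbb R,L^p(\mathbb P,H))$, the same $2^{p-1}$ splitting into drift and stochastic convolution, the same moment inequality with constant $c_p$ (the paper cites \cite[Theorem 4.36]{DZ}), and the same two H\"older steps with exponents $\bigl(\tfrac{p}{p-1},p\bigr)$ and $\bigl(\tfrac{p}{p-2},\tfrac{p}{2}\bigr)$, yielding exactly the contraction constant $\theta_p$. The additional verifications you sketch (into-itself mapping, and the equivalence of bounded mild solutions with fixed points via $t_0\to-\infty$) are points the paper states without detail, and your treatment of them is sound.
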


\begin{proof}
Since the semigroup $U(t)$ is exponentially stable, it can be checked that $x_0\in C_b(\mathbb R,L^p(\mathbb P,H))$
is a mild solution of \eqref{eq3.4.4} if and only if it satisfies the integral equation
\[
x_0(t)=\int_{-\infty}^{t} U(t-\tau) F(\tau, x_0(\tau)) d\tau
+  \int_{-\infty}^{t} U(t-\tau) G(\tau, x_0(\tau)) dW(\tau).
\]
Define an operator $\mathcal S$ on $C_b(\mathbb R,L^p(\mathbb P,H))$ by
\begin{align*}
(\mathcal S x) (t) := \int_{-\infty}^{t} U(t-\tau) F(\tau, x(\tau)) d\tau
+  \int_{-\infty}^{t} U(t-\tau) G(\tau, x(\tau)) dW(\tau).
\end{align*}
Since $F$, $G$ satisfy the conditions (C1) and (C2), it is not hard to check that $\mathcal S$ maps $C_b(\mathbb R,L^p(\mathbb P,H))$
into itself.

By the proof of \cite[Theorem 4.36]{DZ}, we have for any $s<t$
\[
\mathbb E \left|\int_s^t f(\tau) dW(\tau) \right|^p \le c_p \left( \mathbb E\int_s^t |f(\tau)|^2  d\tau \right)^{p/2}.
\]
So by H\"older's inequality with exponents $(p,\frac{p}{p-1})$ and $(\frac{p}{2},\frac{p}{p-2})$ respectively,
we have  for $x,y\in C_b(\mathbb R,L^p(\mathbb P,H))$ and $t\in\mathbb R$
\begin{align*}
&\mathbb E|(\mathcal S x)(t) - (\mathcal S y)(t)|^p \\
& \le 2^{p-1} \bigg[ \mathbb E \left|\int_{-\infty}^{t} U(t-\tau)( F(\tau, x(\tau)) - F (\tau, y(\tau)) ) d\tau \right|^p \\
&\quad + \mathbb E \left| \int_{-\infty}^{t}U(t-\tau) (G(\tau, x(\tau)) -G(\tau, y(\tau)) )dW(\tau)\right|^p\bigg]\\
& \le 2^{p-1} \bigg[ \mathbb E\left(\int_{-\infty}^{t}  \mathcal N e^{-\nu (t-\tau)} \mathcal L |x(\tau)-y(\tau)| d\tau\right)^p \\
&\quad + c_p \left( \mathbb E \int_{-\infty}^{t}\mathcal N^2 e^{-2\nu(t-\tau)} \mathcal L^2 |x(\tau)-y(\tau)|^2 d\tau \right)^{p/2} \bigg]\\
& \le 2^{p-1}\mathcal N^p \mathcal L^p \bigg[ \left(\int_{-\infty}^{t} e^{-\frac12\nu \frac{p}{p-1} (t-\tau)} d\tau \right)^{p-1} \cdot
\mathbb E \int_{-\infty}^{t} e^{-\frac12\nu p (t-\tau)} |x(\tau)-y(\tau)|^p d\tau\\
& \quad + c_p \left( \int_{-\infty}^{t} e^{- \frac{\nu p}{p-2} (t-\tau)} d\tau \right)^{p/2-1}
\cdot \mathbb E \int_{-\infty}^{t} e^{-\frac12\nu p (t-\tau)} |x(\tau)-y(\tau)|^p d\tau \bigg]\\
& \le 2^{p-1}\mathcal N^p \mathcal L^p \left[ \left(\frac{2(p-1)}{\nu p}\right)^{p-1} + c_p  \left(\frac{p-2}{\nu p} \right)^{p/2-1} \right]\\
& \qquad \times \int_{-\infty}^{t} e^{-\frac12\nu p (t-\tau)} \mathbb E |x(\tau)-y(\tau)|^p d\tau\\
&\le 2^{p-1}\mathcal N^p \mathcal L^p \left[ \left(\frac{2(p-1)}{\nu p}\right)^{p-1} + c_p  \left(\frac{p-2}{\nu p} \right)^{p/2-1} \right]
\cdot\frac{2}{\nu p}\cdot \sup_{\tau\in\mathbb R} \mathbb E |x(\tau)-y(\tau)|^p.
\end{align*}
Therefore,
\[
\sup_{t\in\mathbb R}\mathbb E|(\mathcal S x)(t) - (\mathcal S y)(t)|^p \le \theta_p \sup_{t \in\mathbb R} \mathbb E |x(t)-y(t)|^p.
\]
That is, the operator $\mathcal S$ is a contraction mapping on $C_b(\mathbb R,L^p(\mathbb P,H))$. Thus there is a unique
$\xi \in C_b(\mathbb R,L^p(\mathbb P,H))$ satisfying $\mathcal S \xi =\xi$, which is the unique $L^p$ bounded solution of \eqref{eq3.4.4}.
\end{proof}

\begin{remark}\label{RLp}\rm
Note that the contraction constant $\theta_p$ is continuous in $p$ when $p>2$. Furthermore, $c_p=1$ when $p=2$ in Proposition \ref{Lp},  so we have
\[
\lim_{p\to 2^+} \theta_p= \frac{2\mathcal N^2\mathcal L^2}{\nu^2} + \frac{2\mathcal N^2\mathcal L^2}{\nu}.
\]
\end{remark}

\begin{theorem}\label{t3.4.4}
Consider the equation \eqref{eq3.4.4}. Suppose that the following conditions hold:
\begin{enumerate}
\item the semigroup $\{U(t)\}_{t\ge 0}$ acting on the space $H$
is exponentially stable;
\item $F,G\in C(\mathbb R\times  H,  H)$;
\item the functions $F$ and $G$ satisfy the conditions (C1) and
(C2).
\end{enumerate}

Then the following statements hold:
\begin{enumerate}
\item If $\mathcal L< \frac{\nu}{\mathcal N \sqrt{2+\nu}}$, then equation $(\ref{eq3.4.4})$ has a unique solution $\xi \in
C(\mathbb{R},B[0,r]),$ where
\begin{equation}\label{r}
r=\frac{\mathcal N
A_0\sqrt{2+\nu}}{\nu-\mathcal N \mathcal L\sqrt{2+\nu}}
\end{equation}
and
$$B[0,r]:=\{x\in L^{2}(\mathbb P,H):\ ||x||_2\le r\}.$$

\item If $\mathcal L<\frac{\nu}{2\mathcal N\sqrt{1+\nu}}$ and additionally $F,G$ satisfy (C3), then
\begin{enumerate}

\item  $\mathfrak M^{u}_{(F,G)} \subseteq \tilde{\mathfrak M}^{u}_{\xi}$, recalling that
$\tilde{\mathfrak M}^{u}_{\xi}$ means the set of all sequences $\{t_n\}$ such that
$\xi(t+t_n)$ converges in distribution uniformly in $t\in\mathbb R$;

\item  the solution $\xi$ is uniformly comparable in distribution.
\end{enumerate}
\end{enumerate}
\end{theorem}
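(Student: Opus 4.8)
The plan is to prove Theorem \ref{t3.4.4} in two parts, mirroring its statement. For part (i), I would set up a fixed-point argument on the closed ball $B[0,r]$ in $C_b(\mathbb R, L^2(\mathbb P,H))$, which is really the $p=2$ case of Proposition \ref{Lp} combined with a self-mapping estimate. Concretely, define the operator $\mathcal S$ exactly as in Proposition \ref{Lp} and first check that, under the smallness condition $\mathcal L < \frac{\nu}{\mathcal N\sqrt{2+\nu}}$, $\mathcal S$ maps the ball $\{x : \|x\|_\infty \le r\}$ into itself for the radius $r$ given by \eqref{r}. The key estimate reuses the bounds \eqref{eqB19} and \eqref{eqB29} from Theorem \ref{thLS11_2}: splitting $F(\tau,x(\tau)) = (F(\tau,x(\tau)) - F(\tau,0)) + F(\tau,0)$ and likewise for $G$, condition (C2) controls the first difference by $\mathcal L|x(\tau)|$ and condition (C1) bounds the second by $A_0$. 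Feeding these into the Green's operator estimate \eqref{Ge} yields $\|\mathcal S x\|_\infty \le \frac{\mathcal N}{\nu}\sqrt{2+\nu}\,(\mathcal L\|x\|_\infty + A_0)$, and solving $\frac{\mathcal N}{\nu}\sqrt{2+\nu}\,(\mathcal L r + A_0) \le r$ gives precisely the stated $r$. The contraction property is then the $p=2$ limit of the computation in Proposition \ref{Lp} (see Remark \ref{RLp}), so Banach's fixed point theorem delivers the unique $\xi$ in $C(\mathbb R, B[0,r])$.

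For part (ii), the goal is to show $\mathfrak M^u_{(F,G)} \subseteq \tilde{\mathfrak M}^u_\xi$. I would take a sequence $\{t_n\}\in \mathfrak M^u_{(F,G)}$, so that $(F^{t_n}, G^{t_n})$ converges to some limit $(\tilde F,\tilde G)\in H(F,G)$ uniformly on $\mathbb R$ (using the $BUC$ framework and Remark \ref{remBUC}); by Remark \ref{remL10}-(i) the limit pair satisfies (C1)-(C2) with the same constants. Let $\xi_n := \mathcal S_{(F^{t_n},G^{t_n})}\xi_n$ be the unique bounded solution of the shifted equation and $\tilde\xi$ the unique bounded solution of the limit equation. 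The heart of the argument is to estimate $\|\xi_n - \tilde\xi\|_\infty$ in $L^2$-norm. Writing the difference via the two integral representations and adding and subtracting cross terms, I would split $\mathbb E|\xi_n(t) - \tilde\xi(t)|^2$ into a \emph{coefficient-difference} part (controlled by $\sup_t \mathbb E|F^{t_n}(t,\tilde\xi(t)) - \tilde F(t,\tilde\xi(t))|^2$ and the analogous $G$ term, both tending to $0$ by uniform convergence together with Lagrange stability of $\tilde\xi$'s range) and a \emph{Lipschitz} part bounded by $\theta_2 \|\xi_n - \tilde\xi\|_\infty^2$ via (C2). The strengthened smallness hypothesis $\mathcal L < \frac{\nu}{2\mathcal N\sqrt{1+\nu}}$ is exactly what forces the relevant contraction constant below $1$ (note $\lim_{p\to 2^+}\theta_p = \frac{2\mathcal N^2\mathcal L^2}{\nu^2} + \frac{2\mathcal N^2\mathcal L^2}{\nu}$ from Remark \ref{RLp}), so the Lipschitz part can be absorbed into the left-hand side, yielding $\|\xi_n - \tilde\xi\|_\infty \to 0$ uniformly.

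Having shown $\xi_n \to \tilde\xi$ uniformly in $L^2$, I would convert this into convergence in distribution of the \emph{actual} shifts $\xi(\cdot + t_n)$, using the same Brownian-motion shift device as in the proof of Theorem \ref{thLS11_2}-(v): the process $\xi(t+t_n)$ solves the $(F^{t_n},G^{t_n})$-equation driven by the shifted motion $\tilde W_n(\tau) = W(\tau + t_n) - W(t_n)$, hence $\xi(t+t_n)$ and $\xi_n(t)$ share the same law on $H$ for each $t$. Since $L^2$-convergence implies convergence in distribution and $\beta$ is dominated by the $L^2$-distance, $\xi(t+t_n) \to \tilde\xi(t)$ in distribution uniformly in $t\in\mathbb R$, giving $\{t_n\}\in \tilde{\mathfrak M}^u_\xi$ and establishing (a); statement (b), uniform comparability in distribution, then follows because $\tilde{\mathfrak M}^u_\xi \subseteq \tilde{\mathfrak M}_\xi$ by restriction. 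The main obstacle is the coefficient-difference estimate: one must verify that $\sup_{t}\mathbb E|F^{t_n}(t,\tilde\xi(t)) - \tilde F(t,\tilde\xi(t))|^2 \to 0$, which requires the uniform-convergence in the $BUC$ topology to interact correctly with the random argument $\tilde\xi(t)$ ranging over a compact-in-probability set — this is precisely where condition (C3) and the boundedness $\|\tilde\xi\|_\infty \le r$ are needed to pass from pointwise-in-$x$ uniform convergence to convergence uniform over the relevant bounded set of arguments.
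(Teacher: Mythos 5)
Your overall architecture matches the paper's (a fixed point on $C(\mathbb R,B[0,r])$ for part (i); comparison of the shifted and limit equations plus the equal-in-law trick $\xi(\cdot+t_n)\overset{d}{=}\xi_n(\cdot)$ for part (ii)), but two steps you gloss over are exactly where the real work lies, and your proposed resolutions would fail. First, the coefficient-difference term: you correctly flag $\sup_{\tau}\mathbb E|F^{t_n}(\tau,\tilde\xi(\tau))-\tilde F(\tau,\tilde\xi(\tau))|^2\to 0$ as the main obstacle, but ``condition (C3) and the boundedness $\|\tilde\xi\|_\infty\le r$'' do not close it. The convergence \eqref{eqUFG1} is uniform only over $|x|\le r$ in $H$, while $\tilde\xi(\tau)(\omega)$ is unbounded in $H$; the bound $\|\tilde\xi(\tau)\|_2\le r$ controls only second moments and does \emph{not} make the family $\{|\tilde\xi(\tau)|^2:\tau\in\mathbb R\}$ uniformly integrable, so the tail contribution $\mathbb E\bigl[a_{n,\tau}^2\mathbf 1_{\{|\tilde\xi(\tau)|>R\}}\bigr]$ cannot be made small uniformly in $\tau$ and $n$. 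The paper's fix is to invoke Proposition \ref{Lp} to show $\tilde\xi$ is $L^p$-bounded for some $p>2$, whence $\{|\tilde\xi(\tau)|^2\}$ (and then $\{a_{n,\tau}^2\},\{b_{n,\tau}^2\}$ via (C1)--(C2)) is uniformly integrable; this is the actual role of the strengthened hypothesis $\mathcal L<\frac{\nu}{2\mathcal N\sqrt{1+\nu}}$ (via \eqref{bs}), not merely pushing a contraction constant below $1$. Without this ingredient your part (ii)-(a) does not go through.

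Second, part (ii)-(b) does not ``follow by restriction'' from (ii)-(a). Uniform comparability in distribution means $\mathfrak M_{(F,G)}\subseteq\tilde{\mathfrak M}_{\xi}$, and $\mathfrak M_{(F,G)}$ is strictly larger than $\mathfrak M^{u}_{(F,G)}$: for a sequence along which $(F^{t_n},G^{t_n})$ converges only in the compact-open topology, your uniform-on-$\mathbb R$ estimate is unavailable. The paper runs a separate localized argument (the Gronwall-type Lemma \ref{lB}, the splitting of the convolution at $-l_n$ with $l_n\to\infty$, and Remark \ref{remD1}-(iii)) to get $\max_{|t|\le L}\mathbb E|\phi_n(t)|^2\to 0$; you need an analogue of this. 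Finally, a smaller point on part (i): the contraction constant is \emph{not} the $p\to 2^{+}$ limit from Remark \ref{RLp} (which equals $\frac{2\mathcal N^2\mathcal L^2}{\nu^2}+\frac{2\mathcal N^2\mathcal L^2}{\nu}$ and would force the stronger restriction $\mathcal L<\frac{\nu}{\mathcal N\sqrt{2(1+\nu)}}$); the direct $p=2$ computation with the It\^o isometry gives the better constant $\theta_2=\frac{\mathcal N^2\mathcal L^2(2+\nu)}{\nu^2}$, which is what makes the stated hypothesis $\mathcal L<\frac{\nu}{\mathcal N\sqrt{2+\nu}}$ sufficient. Your self-mapping estimate already contains the right bound, so cite that rather than Remark \ref{RLp}.
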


\begin{proof}
(i). 
Note that $C(\mathbb{R},B[0,r])$ is a complete metric space. Define
an operator
$$
\Phi: C(\mathbb{R},B[0,r]) \to C(\mathbb{R},B[0,r])
$$
as follows. If $\phi\in C(\mathbb{R},B[0,r])$, then we put
$h_1(t):=F(t,\phi(t))$ and $h_2(t):=G(t,\phi(t))$ for any $t\in
\mathbb R$. Since the function $F$ satisfies conditions (C1) and
(C2), we have
\begin{equation}\label{eqH1}
||h_1(t)||_2=||F(t,\phi(t))||_2 \le ||F(t,0)||_2 + \mathcal L ||\phi(t)||_2 \le
A_0+\mathcal L r
\end{equation}
for any $t\in \mathbb R$. Analogically we have
\begin{equation}\label{eqH2}
||h_2(t)||_2 \le A_0+ \mathcal L ||\phi(t)||_2 \le A_0+\mathcal L r
\end{equation}
for $t\in \mathbb R$. According to Theorem \ref{thLS11_2}, the
equation
$$
dz(t)=(Az(t)+ h_1(t))dt + h_2(t)dW(t)
$$
has a unique solution $\psi\in C_b(\mathbb{R},L^{2}(\mathbb
P,H))$. Besides, it obeys the estimate
\begin{equation}\label{eqH3}
||\psi||_{\infty}\leq \frac{\mathcal N}{\nu}(2||h_1||_{\infty}^{2}+\nu
||h_2||_{\infty}^{2})^{1/2}.
\end{equation}
From (\ref{eqH1})-(\ref{eqH3}) and \eqref{r} we have
\begin{equation*}\label{eqH4}
||\psi||_{\infty}\leq \frac{\mathcal N}{\nu}(2(A_0+\mathcal Lr)^2+\nu(A_0+\mathcal Lr)^2)^{1/2}=\frac{\mathcal
N\sqrt{2+\nu}}{\nu}(A_0+\mathcal L r) = r.
\end{equation*}
So $\psi\in C(\mathbb{R},B[0,r])$. Let $\Phi(\phi):=\psi$. It
follows from the above argument that $\Phi$ is well defined.

Let us show that the operator $\Phi$ is a contraction. In
fact, it is easy to note that the function
$\psi_1-\psi_2=\Phi(\phi_1)-\Phi(\phi_2)$ is the unique solution
from  $C_{b}(\mathbb R,L^{2}(\mathbb P,H))$ of the equation
\begin{align*}
du(t)=(Au(t)+F(t,\phi_1(t))-F(t,\phi_2(t)))dt +
(G(t,\phi_1(t))- G(t,\phi_2(t)))dW(t).
\end{align*}
By Theorem \ref{thLS11_2}, we have the following estimate
\begin{align}\label{L2}
||\Phi(\phi_1)-\Phi(\phi_2)||_{\infty}^2 & \leq  \frac{
\mathcal N^2}{\nu^2}\Big{(} 2 \sup\limits_{t\in \mathbb R} \mathbb
E|F(t,\phi_1(t))-F(t,\phi_2(t))|^{2} \nonumber\\
& \qquad  +\nu \sup\limits_{t\in
\mathbb R} \mathbb
 E|G(t,\phi_1(t))-G(t,\phi_2(t))|^{2}\Big{)} \nonumber \\
& \leq \frac{\mathcal N^2
\mathcal L^2 (2+\nu)}{\nu^2}||\phi_1-\phi_2||_{\infty}^2 =: \theta_2
||\phi_1-\phi_2||_{\infty}^2.
\end{align}
By the assumption on $\mathcal L$ we have
$$
\theta_2 = \frac{\mathcal N^2
\mathcal L^2 (2+\nu)}{\nu^2} < \frac{\mathcal N^2
(2+\nu)}{\nu^2} \cdot \frac{\nu^2}{\mathcal N^2 (2+\nu)}=1,
$$
so $\Phi$ is a contraction.  Consequently, there exists a unique
function $\xi\in C (\mathbb R, B[0,r])$ such that $\Phi(\xi)=\xi$.

(ii)-(a).
Let $\{t_n\}\in \mathfrak
M_{(F,G)}^{u}$. Then there exists
$(\tilde{F},\tilde{G})\in H(F,G)$ such that for any $r>0$
\begin{equation}\label{eqUFG1}
\sup\limits_{t\in \mathbb R,\ |x|\le r} |F(t+t_n,x)-\tilde{F}(t,x)| \to 0
\end{equation}
and
\begin{equation}\label{eqUFG2}
\sup\limits_{t\in\mathbb R,\ |x|\le r} |G(t+t_n,x)-\tilde{G}(t,x)| \to 0
\end{equation}
as $n\to \infty$. Consider equations
\begin{equation}\label{eqUFGn}
dx(t)=(Ax(t)+F^{t_n}(t,x(t)))dt + G^{t_n}(t,x(t))dW(t) \ (n\in
\mathbb N)
\end{equation}
and
\begin{equation}\label{eqUFG0}
dx(t)=(Ax(t)+\tilde{F}(t,x(t)))dt + \tilde{G}(t,x(t))dW(t).
\end{equation}
Since the functions $(F^{t_n},G^{t_n})$ ($n\in\mathbb N$) and
$(\tilde{F},\tilde{G})$ satisfy conditions (C1) and (C2) (see Remark
\ref{remL10}), by the first part of the theorem equation
(\ref{eqUFGn}) (respectively, equation (\ref{eqUFG0})) has a unique
solution $\xi_{n}\in C(\mathbb R, B[0,r])$ (respectively,
$\tilde{\xi}\in C (\mathbb R, B[0,r])$). We will show that
$\{\xi_{n}(t)\}$ converges, in $L^2$ norm, to $\tilde{\xi}(t)$
uniformly in $t\in\mathbb R$. To this end we note that $\xi_{n}$
($n\in\mathbb N$) is the unique solution from $C(\mathbb R, B[0,r])$
of equation
\begin{equation*}\label{eqULn}
dx(t)=(Ax(t)+h_{n}(t))dt + g_{n}(t)dW(t) \ (n\in \mathbb N),
\end{equation*}
where
$(h_n(t),g_n(t)):=(F^{t_n}(t,\xi_{n}(t)),G^{t_n}(t,\xi_{n}(t))$ for
$t\in \mathbb R$ and $n\in\mathbb N$ and, respectively,
$\tilde{\xi}$ is the unique solution from $C (\mathbb R, B[0,r])$ of equation
\begin{equation*}\label{eqULn1}
dx(t)=(Ax(t)+\tilde{h}(t))dt + \tilde{g}(t)dW(t) \ (n\in \mathbb N),
\end{equation*}
where
$(\tilde{h}(t),\tilde{g}(t)):=(\tilde{F}(t,\tilde{\xi}(t)),\tilde{G}(t,\tilde{\xi}(t))$
for $t\in \mathbb R$. It is easy to check that
$\phi_{n}:=\xi_{n}-\tilde{\xi}$ is the unique solution from $C
(\mathbb R, B[0,2r])$ of the equation
\begin{equation}\label{eqUL1n}
dx(t)=(Ax(t)+(h_{n}(t)-\tilde{h}(t)))dt + (g_{n}(t)-\tilde{g}(t)
)dW(t) \ (n\in \mathbb N),
\end{equation}
where $h_{n}-\tilde{h}, g_{n}-\tilde{g} \in C_{b}(\mathbb
R,L^{2}(\mathbb R,H))$. In virtu of Theorem \ref{thLS11_2} (item
(ii)) we have
\begin{equation}\label{eqUBL51}
||\phi_{n}||^2_{\infty}\le \frac{\mathcal
N^2}{\nu^2}\big{(}2||h_{n}-\tilde{h}||^{2}_{\infty}+ \nu
||g_{n}-\tilde{g}||^{2}_{\infty}\big{)}.
\end{equation}

Taking into consideration that the functions $(F^{t_n},G^{t_n})$
($n\in\mathbb N$) and $(\tilde{F},\tilde{G})$ satisfy conditions
(C1) and (C2), and $\xi_{n},\tilde{\xi}\in C(\mathbb
R, B[0,r])$ ($n\in\mathbb N$) we have
\begin{align}\label{eqUBL52}
\mathbb E|h_{n}(\tau)-\tilde{h}(\tau))|^2 & =\mathbb
E|F^{t_n}(\tau,\xi_n(\tau))-F^{t_n}(\tau,\tilde{\xi}(\tau))+F^{t_n}(\tau,\tilde{\xi}(\tau))
-\tilde{F}(\tau,\tilde{\xi}(\tau))|^2 \nonumber \\
&\le
 2(\mathbb
E|F^{t_n}(\tau,\xi_n(\tau))-F^{t_n}(\tau,\tilde{\xi}(\tau))|^2
+\mathbb E|F^{t_n}(\tau,\tilde{\xi}(\tau))
-\tilde{F}(\tau,\tilde{\xi}(\tau))|^2) \nonumber\\
&\le  2(\mathcal L^2\mathbb E|\xi_n(\tau)-\tilde{\xi}(\tau)|^2 +
\sup\limits_{\tau\in\mathbb R}\mathbb E|F^{t_n}(\tau,\tilde{\xi}(\tau))
-\tilde{F}(\tau,\tilde{\xi}(\tau))|^2) \nonumber \\
& \le
2(\mathcal L^2||\phi_{n}||^2_{\infty} + \sup\limits_{\tau\in\mathbb R}\mathbb Ea_{n,\tau}^2),
\end{align}
where
\[
a_{n,\tau}:= |F^{t_n}(\tau,\tilde{\xi}(\tau))
-\tilde{F}(\tau,\tilde{\xi}(\tau))|.
\]
Using the same arguments we have
\begin{eqnarray}\label{eqUBL54}
\mathbb E|g_{n}(\tau)-\tilde{g}(\tau)|^2\le 2(\mathcal L^2||\phi_{n}||^2_{\infty} + \sup\limits_{\tau\in\mathbb R}\mathbb E b_{n,\tau}^2)
\end{eqnarray}
with
\[
b_{n,\tau}:= |G^{t_n}(\tau,\tilde{\xi}(\tau))-\tilde{G}(\tau,\tilde{\xi}(\tau))|.
\]
From (\ref{eqUBL51})-(\ref{eqUBL54}) we obtain
\begin{equation*}\label{eqUBL55}
||\phi_{n}||^2_{\infty}  \le \frac{\mathcal N^2}{\nu^2}\Big{[}
4(\mathcal L^2||\phi_{n}||^2_{\infty} + \sup\limits_{\tau\in\mathbb R}\mathbb Ea_{n,\tau}^2) +
2\nu(\mathcal L^2||\phi_{n}||^2_{\infty} + \sup\limits_{\tau\in\mathbb R}\mathbb Eb_{n,\tau}^2) \Big{]}.
\end{equation*}
Consequently,
\begin{eqnarray}\label{eqUBL56}
\Big{(}1-\frac{2\mathcal N^2\mathcal
L^2}{\nu^2}(2+\nu)\Big{)}||\phi_{n}||^2_{\infty}\le \frac{4\mathcal
N^2}{\nu^2} \sup\limits_{\tau\in\mathbb R}\mathbb Ea_{n,\tau}^2 +
\frac{2\mathcal N^2}{\nu} \sup\limits_{\tau\in\mathbb R}\mathbb E
b_{n,\tau}^2.
\end{eqnarray}
By our assumption on $\mathcal L$, the coefficients of $||\phi_{n}||^2_{\infty}$ is positive.

We note by \eqref{L2} and Remark \ref{remL10} that, for $p=2$, the
contraction constant $\theta_2$ for the equation \eqref{eqUFG0} is
\[
\theta_2= \frac{2\mathcal N^2\mathcal L^2}{\nu^2}  +  \frac{\mathcal N^2\mathcal L^2}{\nu}.
\]
Comparing to Remark \ref{RLp}, we have
\[
\lim_{p\to 2^+} \theta_p = \theta_2 + \frac{\mathcal N^2\mathcal L^2}{\nu}.
\]
We also note that $\lim_{p\to 2^+} \theta_p <1$ if and only if
\begin{equation}\label{bs}
\mathcal L<\frac{\nu}{\mathcal{N}\sqrt{2(1+\nu)}},
\end{equation}
which is satisfied by our assumption on $\mathcal L$.
So it follows from Proposition \ref{Lp} that
\eqref{eqUFG0} admits a unique $L^p$-bounded solution for some $p>2$.
This $L^p$-bounded solution is exactly the unique $L^2$-bounded solution $\tilde\xi$ of \eqref{eqUFG0}. So the family
\[
\{ |\tilde\xi(\tau)|^2: \tau\in\mathbb R\}
\]
is uniformly integrable, and hence by conditions (C1) and (C2) the families
\[
\{a_{n,\tau}^2: n\in\mathbb N, \tau\in \mathbb R\} \quad \hbox{and}\quad
\{b_{n,\tau}^2: n\in\mathbb N, \tau\in \mathbb R\}
\]
are uniformly integrable. This together with \eqref{eqUFG1} and
\eqref{eqUFG2} implies: taking limit in (\ref{eqUBL56}), we obtain
the required result, i.e. $\xi_{n}(t)\to \tilde{\xi}(t)$  uniformly
in $t\in\mathbb R$ in $L^2$-norm.


Since $L^2$ convergence implies convergence in distribution, we have
$\xi_n(t)\to \tilde{\xi}(t)$ in distribution uniformly on $\mathbb R$.
On the other hand, $\xi(t+t_n)$ satisfies the equation
$$
\xi(t+t_n)= \int _{-\infty}^{t}U(t-\tau)F(\tau +t_n,\xi(\tau
+t_n))d\tau +\int _{-\infty}^{t}U(t-\tau)G(\tau +t_n,\xi(\tau
+t_n))d\tilde{W}_n(\tau),
$$
with $\tilde W_n(t)=W(t+t_n)-W(t_n)$. Note that $\tilde W_n(\cdot)$ is also a standard
Brownian motion with the same distribution as $W(\cdot)$, so $\xi_{n}(t)$ and $\xi(t+t_n)$ share the same distribution on $H$.
This implies $\xi(t+t_n)\to \tilde{\xi}(t)$ in distribution uniformly in $t\in
\mathbb R$. Thus we have $\{t_n\}\in \mathfrak {\tilde{M}}^{u}_{\xi}$.

(ii)-(b). 
Let $\{t_n\}\in \mathfrak M_{(F,G)}$. Then there exists
$(\tilde{F},\tilde{G})\in H(F,G)$ such that for any $r,l>0$
\begin{equation}\label{eqFG1}
\sup\limits_{|t|\le l,\ |x|\le r} |F(t+t_n,x)-\tilde{F}(t,x)| \to 0
\end{equation}
and
\begin{equation}\label{eqFG2}
\sup\limits_{|t|\le l,\ |x|\le r} |G(t+t_n,x)-\tilde{G}(t,x)| \to 0
\end{equation}
as $n\to \infty$. Like what we did in the proof of (ii)-(a): let
$\xi_n$ and $\tilde\xi$ be the unique bounded solutions of the shift
equation and the limit equation respectively, and still denote
$\phi_n=\xi_n-\tilde\xi$. To finish the proof, it suffices to show
$\phi_n\to 0$ in the space $C(\mathbb R,L^{2}(\mathbb R,H))$, i.e.
$\lim_{n\to\infty}\max_{|t|\le L} \mathbb E|\phi_n(t)|^2=0$ for any
$L>0$.

Since $\phi_n$ is the unique bounded solution of equation \eqref{eqUL1n}, by the Cauchy-Schwarz inequality and
It\^o's isometry property we have
\begin{align}\label{eqBL51}
\mathbb E|\phi_n(t)|^2 & \le 2\mathcal N^2\Big{(}
\frac{1}{\nu}\int_{-\infty}^{t}e^{-\nu (t-\tau)}\mathbb
E|h_n(\tau)-\tilde{h}(\tau)|^2d\tau \\
&\qquad +
 \int_{-\infty}^{t}e^{-2\nu
(t-\tau)}\mathbb E|g_n(\tau)-\tilde{g}(\tau)|^2d\tau \Big{)}.
\nonumber
\end{align}
By \eqref{eqUBL52} we have
\begin{equation}\label{eqBL52}
\mathbb E|h_{n}(\tau)-\tilde{h}(\tau)|^2  \le
 2(\mathcal L^2\mathbb E|\phi_n(\tau)|^2 + \mathbb E a_{n,\tau}^2).
\end{equation}
Similar to \eqref{eqH1}, we have
\begin{align*}\label{eqBL53}
\mathbb E|h_n(\tau)-\tilde{h}(\tau)|^2 &=\mathbb
E|F^{t_n}(\tau,\xi_n(\tau)) -\tilde{F}(\tau,\tilde{\xi}(\tau))|^2
\nonumber
\\
& \le 2(\mathbb E|F^{t_n}(\tau,\xi_n(\tau))|^2+\mathbb
E|\tilde{F}(\tau,\tilde{\xi}(\tau))|^2)  \\
& \le
4(A_0+\mathcal L r)^2\nonumber
\end{align*}
for any $\tau\in\mathbb R$ and, consequently,
\begin{equation}\label{eqBL531}
||h_n-\tilde{h}||_{\infty}^2\le
4(A_0+\mathcal L r)^2
\end{equation}
for any $n\in\mathbb N$.

Using the same arguments as above we have
\begin{equation}\label{eqBL54}
\mathbb E|g_{n}(\tau)-\tilde{g}(\tau)|^2\le 2(\mathcal L^2\mathbb E|\phi_n(\tau)|^2 +  \mathbb
E b_{n,\tau}^2)
\end{equation}
and
\begin{eqnarray}\label{eqBL55}
 ||g_n-\tilde{g}||_{\infty}^2\le 4(A_0+\mathcal L r)^2
\end{eqnarray}
for any $n\in\mathbb N$.

From (\ref{eqBL51}), (\ref{eqBL52}), (\ref{eqBL54}) and taking into account that
$e^{-2\nu (t-\tau)}\le e^{-\nu (t-\tau)}$ ($t\ge \tau$), we obtain
\begin{align*}
 \mathbb E|\phi_n(t)|^2 & \le \left( \frac{4\mathcal N^2\mathcal L^2}{\nu} +
4\mathcal
N^2\mathcal L^2\right)\int_{-\infty}^{t}e^{-\nu (t-\tau)}\mathbb E|\phi_n(\tau)|^{2}d\tau \nonumber \\
&\qquad + 4\mathcal N^2\int_{-\infty}^{t}e^{-\nu
(t-\tau)}\left(\frac{1}{\nu}\mathbb E a_{n,\tau}^2+\mathbb E b_{n,
\tau}^2\right)d\tau.
\end{align*}
By Lemma \ref{lB} we have
\begin{align}\label{eqBL57}
\max\limits_{|t|\le L}\mathbb E|\phi_n(t)|^2 & \le 4\mathcal
N^2\frac{e^{kL}e^{-kl}}{k}\sup\limits_{t\in\mathbb
R}\left(\frac{1}{\nu}\mathbb E a_{n,t}^2+\mathbb E b_{n,t}^2\right)\nonumber\\
&\qquad +4\mathcal N^2\frac{1-e^{-kL}e^{-kl}}{k}\max\limits_{|t|\le
l}\left(\frac{1}{\nu}\mathbb E a_{n,t}^2+ \mathbb E
b_{n,t}^2\right),
\end{align}
where
$$
k:=\nu -( \frac{4\mathcal N^2}{\nu}\mathcal L^2 + 4\mathcal
N^2\mathcal L^2)>0
$$
by the assumption $\mathcal L < \frac{\nu}{2\mathcal
N\sqrt{1+\nu}}$.

Let now $\{l_n\}$ be a sequence of positive numbers such that
$l_n\to +\infty$ as $n\to \infty$. According to inequality
(\ref{eqBL531}), (\ref{eqBL55}) and (\ref{eqBL57}) we obtain
\begin{align}\label{eqBL58}
\max\limits_{|t|\le L}\mathbb E|\phi_n(t)|^2 & \le \frac{16\mathcal
N^2e^{kL}e^{-kl_n}}{k}(\frac{1}{\nu} +1)(A_0+\mathcal L r)^2 \nonumber \\
& \qquad + \frac{4\mathcal
N^2(1-e^{-kL}e^{-kl_n})}{k}\max\limits_{|t|\le
l_n}\left(\frac{1}{\nu}\mathbb E a_{n,t}^2 + \mathbb E b_{n,t}^2
\right).
\end{align}
By Remark \ref{remD1}-(iii), passing to limit in (\ref{eqBL58}) as
$n\to \infty$ we obtain for any $L>0$
\begin{equation*}\label{eqBL59}
\lim\limits_{n\to \infty}\max\limits_{|t|\le L}\mathbb
E|\phi_{n}(t)|^2=0
\end{equation*}
by \eqref{eqFG1}, \eqref{eqFG2} and the uniform integrability of the families
$\{a_{n,\tau}^2: n\in\mathbb N, \tau\in \mathbb R\}$ and $\{b_{n,\tau}^2: n\in\mathbb N, \tau\in \mathbb R\}$.
That is, $\xi_{n}\to \tilde{\xi}$ as $n\to \infty$ in the space
$C(\mathbb R,L^{2}(\mathbb P,H))$.
So we have
$\xi_n(t)\to \tilde{\xi}(t)$ in distribution uniformly in $t\in
[-L,L]$ for any $L>0$. Since $\xi_n (t)$ and $\xi(t+t_n)$ share the same distribution, $\xi(t+t_n)\to
\tilde{\xi}(t)$ in distribution uniformly in $t\in [-L,L]$ for all
$L>0$. Thus we have
$\{t_n\}\in \mathfrak {\tilde{M}}_{\xi}$, and hence $\xi$ is uniformly comparable in distribution.
The theorem is completely proved.
\end{proof}

\begin{coro}\label{corAA1}
Assume that the conditions of Theorem
\ref{t3.4.4} hold.
\begin{enumerate}
\item
If the functions $F$ and $G$ are jointly stationary (respectively,
$\tau$--periodic, quasi-periodic with the spectrum of frequencies
$\nu_1,\nu_2,\dots,\nu_k$, Bohr almost periodic, Bohr almost automorphic,
Birkhoff recurrent, Lagrange stable, Levitan almost periodic, almost
recurrent, Poisson stable) in $t\in\mathbb R$ uniformly with respect
to $x\in  H$ on every bounded subset, then so is the unique bounded
solution $\xi$ of equation (\ref{eq3.4.4}) in distribution.

\item
If $F$ and $G$ are jointly pseudo-periodic (respectively, pseudo-recurrent)
and $F$ and $G$ are jointly Lagrange stable, in $t\in\mathbb R$ uniformly with respect
to $x\in  H$ on every bounded subset, then the
unique bounded solution $\xi$ of (\ref{eq3.4.4}) is pseudo-periodic
(respectively, pseudo-recurrent) in distribution.
\end{enumerate}
\end{coro}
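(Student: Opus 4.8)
The plan is to deduce everything from the abstract comparability theorem (Theorem \ref{th1}) once the ``in distribution'' language has been converted into the ordinary recurrence of a suitable law function. To this end I would first associate to the solution $\xi$ the map $\Phi \in C(\mathbb R, \mathcal P(H))$ defined by letting $\Phi(t)$ be the law (distribution on $H$) of the random variable $\xi(t)$. Since $(\mathcal P(H),\beta)$ is a complete separable metric space and $\beta$ metrizes weak convergence, $\Phi$ is a genuine element of $C(\mathbb R,\mathcal P(H))$, and by the definition of convergence in distribution together with Remark \ref{remD1} one obtains the identifications $\tilde{\mathfrak N}_\xi = \mathfrak N_\Phi$, $\tilde{\mathfrak M}_\xi = \mathfrak M_\Phi$ and $\tilde{\mathfrak M}^u_\xi = \mathfrak M^u_\Phi$; in particular $\xi$ possesses a given recurrence property ``in distribution'' exactly when $\Phi$ possesses that same property in $C(\mathbb R,\mathcal P(H))$. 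On the driving side, under (C1)--(C3) we have $(F,G) \in BUC(\mathbb R\times H,H)^2$ (Remark \ref{remL10}), so I would pass to the $BC$-valued map $\mathcal H(t):=(F(t,\cdot),G(t,\cdot))$; by Remark \ref{remBUC} this does not alter the relevant sequence sets, $\mathfrak M_{(F,G)}=\mathfrak M_{\mathcal H}$ and $\mathfrak M^u_{(F,G)}=\mathfrak M^u_{\mathcal H}$, and the joint recurrence of $(F,G)$ in $t$ uniform on bounded $x$-sets is precisely the corresponding recurrence of $\mathcal H$.

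With these translations in place, I would apply Theorem \ref{t3.4.4}(ii), whose item (b) furnishes $\mathfrak M_{(F,G)}\subseteq \tilde{\mathfrak M}_\xi$. Read through the identifications above this says $\mathfrak M_{\mathcal H}\subseteq \mathfrak M_\Phi$, i.e. $\Phi$ is uniformly comparable (in Shcherbakov's sense) with $\mathcal H$; and by Theorem \ref{th1}(1) it follows that $\mathfrak N_{\mathcal H}\subseteq \mathfrak N_\Phi$, so $\Phi$ is also comparable with $\mathcal H$. Thus both the comparability and the uniform comparability of $\Phi$ with $\mathcal H$ are in hand, and it only remains to feed the recurrence type of $\mathcal H$ (equivalently of $(F,G)$) into the appropriate clause of Theorem \ref{th1}.

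For part (i) I would argue class by class. For the stationary, $\tau$-periodic, Levitan almost periodic, almost recurrent and Poisson stable cases, comparability alone transfers the property (Theorem \ref{th1}(2)); for the quasi-periodic, Bohr almost periodic, Bohr almost automorphic, Birkhoff recurrent and Lagrange stable cases, the uniform comparability just established is exactly what Theorem \ref{th1}(3) requires. In every instance $\Phi$ inherits the property of $\mathcal H$, which is the desired conclusion that $\xi$ has that property in distribution; existence and uniqueness of $\xi\in C(\mathbb R,B[0,r])$ are supplied by Theorem \ref{t3.4.4}(i). For part (ii), $(F,G)$ --- hence $\mathcal H$ --- is moreover Lagrange stable, so with the uniform comparability of $\Phi$ with $\mathcal H$ I would invoke Theorem \ref{th1}(4) to carry the pseudo-periodic (respectively, pseudo-recurrent) property over to $\Phi$, i.e. to $\xi$ in distribution.

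The analytic content is entirely contained in Theorems \ref{thLS11_2} and \ref{t3.4.4} and need not be reproved. The step that deserves care --- and the one I would expect a reader to scrutinize --- is the bookkeeping of comparability levels: one must check that each ``compact-type'' class (quasi-periodic, Bohr almost periodic and automorphic, Birkhoff recurrent, Lagrange stable, and the two pseudo- classes) genuinely relies on the uniform comparability $\mathfrak M_{(F,G)}\subseteq \tilde{\mathfrak M}_\xi$ coming from item (ii)-(b) (together with Lagrange stability of $(F,G)$ for the pseudo- classes), while the remaining classes use only the plain comparability deduced from it via Theorem \ref{th1}(1). The single genuinely metric verification is the identification $\tilde{\mathfrak M}_\xi=\mathfrak M_\Phi$ and its variants, namely that uniform-on-compacts convergence of the laws in the $\beta$ metric coincides with convergence of $\Phi$ in $C(\mathbb R,\mathcal P(H))$; this is immediate from Remark \ref{remD1} and the fact that $\beta$ metrizes weak convergence.
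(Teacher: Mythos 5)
Your proposal is correct and follows essentially the same route as the paper, which proves the corollary in one line by citing Theorems \ref{th1}, \ref{t3.4.4} and Remark \ref{remBUC}; you have merely made explicit the translation of ``recurrence in distribution'' into recurrence of the law-valued curve $t\mapsto\Phi(t)\in\mathcal P(H)$ and the bookkeeping of which clause of Theorem \ref{th1} each class requires.
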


\begin{proof} This statement follows from Theorems \ref{th1},
\ref{t3.4.4} and Remark \ref{remBUC}.
\end{proof}



\section{Convergence in semi-linear SDEs}

In this section we consider the stochastic differential equation
\begin{equation}\label{eq3.4.40}
dx(t)=(Ax(t)+F(t,x(t)))dt + G(t,x(t))dW(t),
\end{equation}
where $F,G\in C(\mathbb{R}\times  H,  H)$ and the linear operator
$A$ is an infinitesimal generator which generates a
$C_0$-semigroup $\{U(t)\}_{t\ge 0}$, which is exponentially stable.

\begin{definition} \rm
An $\mathcal F_{t}$-adapted
processes $\{x(t)\}_{t\ge t_0}$ is said to be a mild solution
of equation (\ref{eq3.4.40}) with initial value $x(t_0)=x_0$ ($t_0\in
\mathbb R$) if it satisfies the
stochastic integral equation
\begin{align*}
x(t) &= U(t-t_0)x_0+
\int_{t_0}^{t}U(t-s) F(s,x(s)) ds \\
&\quad +
\int_{t_0}^{t}U(t-s) G(s,x(s)) dW(s)
\end{align*}
for $t\ge t_0$.
\end{definition}

\begin{theorem}\label{thC1}
Consider the equation \eqref{eq3.4.40}.
Suppose that the following conditions hold:
\begin{enumerate}
\item the semigroup $\{U(t)\}_{t\ge 0}$ acting on the space $H$ is exponentially stable;
\item $F,G\in C(\mathbb R\times  H,  H)$ are locally Lipschitz in $x\in H$;
\item there exist two positive constants $A_0, M$ such that $|F(t,x)|,|G(t,x)|\leq A_0 +M|x| $ for all $x\in  H$ and
$t\in\mathbb{R}$;
\item  $M<\frac{\nu}{\mathcal N \sqrt{6(\nu+1)}}$.
\end{enumerate}

Then for any initial value $x_0$ with $\mathbb E |x_0|^2<\infty$ we have
\begin{align}\label{eqC1_00}
&\mathbb E|x(t;t_0,x_0)|^{2}\nonumber \\
&\le 3\mathcal N^2\left[\mathbb E |x_0|^{2}-\frac{2A_0^2(\nu+1)}{\nu^2 -6\mathcal
N^{2} M^{2}(\nu+1)}\right] \exp\{-[\nu - 6\mathcal N^{2} M^2(1+1/\nu)](t-t_0)\} \nonumber\\
&\qquad + \frac{6\mathcal N^{2}A_0^2 (\nu+1)}{\nu^2 - 6\mathcal N^{2}M^{2} (\nu+1)}
\end{align}
for any $t\ge t_0$, where $x(t;t_0,x_0)$ denotes the solution of the equation (\ref{eq3.4.40})
passing through $x_0$ at the initial moment $t_0$.
\end{theorem}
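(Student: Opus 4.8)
The plan is to reduce the estimate to a scalar Volterra inequality for the second moment $u(t):=\mathbb E|x(t;t_0,x_0)|^2$ obtained from the mild formulation, and then to solve that inequality by a Gronwall/comparison argument so as to read off the explicit bound \eqref{eqC1_00}. Before any manipulation I would record that hypotheses (ii)--(iii) (local Lipschitz continuity together with the linear growth $|F|,|G|\le A_0+M|x|$) guarantee, for each square-integrable datum, a unique global mild solution with $u(t)<\infty$ for all $t\ge t_0$; a routine stopping-time localization then legitimizes all the estimates below, in particular the use of It\^o's isometry on the a priori finite quantity $u(t)$.

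First I would start from the mild solution
\[
x(t)=U(t-t_0)x_0+\int_{t_0}^{t}U(t-s)F(s,x(s))\,ds+\int_{t_0}^{t}U(t-s)G(s,x(s))\,dW(s),
\]
apply $|a+b+c|^2\le 3(|a|^2+|b|^2+|c|^2)$, and estimate the three expectations separately. For the semigroup term, $\|U(t-t_0)\|\le \mathcal N e^{-\nu(t-t_0)}$ gives $\mathbb E|U(t-t_0)x_0|^2\le \mathcal N^2 e^{-2\nu(t-t_0)}\mathbb E|x_0|^2$. For the drift term I would split $e^{-\nu(t-s)}=e^{-\nu(t-s)/2}e^{-\nu(t-s)/2}$ and apply Cauchy--Schwarz exactly as in \eqref{eqB19}, producing $\frac{\mathcal N^2}{\nu}\int_{t_0}^{t}e^{-\nu(t-s)}\mathbb E|F(s,x(s))|^2\,ds$; for the diffusion term I would use It\^o's isometry as in \eqref{eqB29}, producing $\mathcal N^2\int_{t_0}^{t}e^{-2\nu(t-s)}\mathbb E|G(s,x(s))|^2\,ds$. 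The growth hypothesis (iii), in the form $\mathbb E|F(s,x(s))|^2,\ \mathbb E|G(s,x(s))|^2\le 2A_0^2+2M^2u(s)$, then closes the inequality in $u$.

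Consolidating the two kernels through $e^{-2\nu(t-s)}\le e^{-\nu(t-s)}$ (valid for $t\ge s$) yields a single-kernel inequality
\[
u(t)\le 3\mathcal N^2 e^{-2\nu(t-t_0)}\mathbb E|x_0|^2+\int_{t_0}^{t}e^{-\nu(t-s)}\big(a+b\,u(s)\big)\,ds,
\]
with $a=6\mathcal N^2(1+1/\nu)A_0^2$ and $b=6\mathcal N^2(1+1/\nu)M^2$. Here the smallness assumption (iv) is precisely equivalent to $b<\nu$, hence it is exactly what makes the effective decay exponent $\lambda:=\nu-b=\nu-6\mathcal N^2 M^2(1+1/\nu)$ positive and the equilibrium constant $C:=a/\lambda=\frac{6\mathcal N^2 A_0^2(\nu+1)}{\nu^2-6\mathcal N^2 M^2(\nu+1)}$ finite and positive. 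I would finish by comparing $u$ with the majorant $v$ solving the associated scalar problem: differentiating the Volterra right-hand side shows $v'+\lambda v=a-\nu\,(3\mathcal N^2 e^{-2\nu(t-t_0)}\mathbb E|x_0|^2)$ with $v(t_0)=3\mathcal N^2\mathbb E|x_0|^2$; solving this first-order linear ODE, discarding the nonpositive contribution of the last forcing term, and regrouping the decaying exponentials produces a bound of the form $3\mathcal N^2(\mathbb E|x_0|^2-C)e^{-\lambda(t-t_0)}+C$, i.e.\ \eqref{eqC1_00}.

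The moment estimate is routine; the delicate part is the final comparison. The main obstacle is twofold. First, the raw moment inequality genuinely carries two distinct kernels, $e^{-\nu(t-s)}$ from the drift and $e^{-2\nu(t-s)}$ from the diffusion, which must be merged into one without spoiling the sharp constants. Second, extracting the exact closed form \eqref{eqC1_00}—in particular the precise prefactor $3\mathcal N^2(\mathbb E|x_0|^2-C)$ and the equilibrium value $C$—requires careful bookkeeping in the Gronwall step and uses $\mathcal N\ge1$ together with $\lambda<2\nu$ to control the auxiliary exponential forcing. A cleaner route to pin down the constants is a verification argument: check directly that the right-hand side of \eqref{eqC1_00} is a supersolution of the Volterra inequality and then invoke comparison.
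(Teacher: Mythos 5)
Your proposal is correct and follows essentially the same route as the paper: the mild-solution formula, the three-term splitting with Cauchy--Schwarz on the drift and It\^o's isometry on the diffusion, the linear-growth bound, the merging of the kernels via $e^{-2\nu(t-s)}\le e^{-\nu(t-s)}$, and a Gronwall/comparison step whose solution yields exactly the prefactor $3\mathcal N^2\mathbb E|x_0|^2-C$ and equilibrium constant $C=\frac{6\mathcal N^2A_0^2(\nu+1)}{\nu^2-6\mathcal N^2M^2(\nu+1)}$. The only cosmetic difference is that the paper substitutes $u(t)=e^{\nu t}\mathbb E|x(t)|^2$ to reduce to a plain Gronwall inequality and solves the associated linear ODE, whereas you differentiate the convolution-kernel Volterra inequality directly and discard the nonpositive forcing term; the two bookkeepings are equivalent and give the same constants.
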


\begin{proof}
Since
\begin{align*}
 x(t;t_0,x_0) &=U(t-t_0)x_0+\int_{t_0}^{t}U(t-s)F(s,x(s;t_0,x_0))ds \\
& \quad +\int_{t_0}^{t}U(t-s)G(s,x(s;t_0,x_0))dW(s)
\end{align*}
for any $t\ge t_0$, by the Cauchy-Schwarz inequality and It\^o's isometry property we have
\begin{align}\label{eqC1_3}
 \mathbb E|x(t;t_0,x_0)|^2  = & ~\mathbb E\bigg|U(t-t_0)x_0+\int_{t_0}^{t}U(t-s)F(s,x(s;t_0,x_0))ds\nonumber \\
& \quad +\int_{t_0}^{t}U(t-s)G(s,x(s;t_0,x_0))dW(s)\bigg|^2 \nonumber\\
 \le & ~
3\bigg(\mathbb E|U(t-t_0)x_0|^2+ \mathbb E\left|\int_{t_0}^{t}U(t-s)F(s,x(s;t_0,x_0))ds\right|^2  \nonumber \\
&
\quad + \mathbb E\left|\int_{t_0}^{t}U(t-s)G(s,x(s;t_0,x_0))dW(s)\right|^2\bigg) \nonumber \\
\le & ~
3\bigg[\mathcal N^2e^{-2\nu (t-t_0)}\mathbb E|x_0|^2+ \mathbb E\left| \int_{t_0}^{t} U(t-s)F(s,x(s;t_0,x_0))ds \right|^2 \nonumber \\
& \quad  + \int_{t_0}^{t}\mathbb E|U(t-s)G(s,x(s;t_0,x_0))|^2ds\bigg] \nonumber \\
\le & ~ 3\bigg[\mathcal N^2e^{-2\nu (t-t_0)}\mathbb E|x_0|^2+
\frac1\nu\cdot\int_{t_0}^{t}\mathcal N^2e^{-\nu (t-s)}\mathbb E|F(s,x(s;t_0,x_0))|^2ds  \nonumber \\
&
\quad + \int_{t_0}^{t}\mathcal N^2e^{-2\nu (t-s)}\mathbb E|G(s,x(s;t_0,x_0))|^2ds\bigg] \nonumber \\
\le &~  3\mathcal N^2e^{-\nu t} \left[e^{\nu t_0}\mathbb E |x_0|^2+
2(1+\frac1\nu)\int_{t_0}^{t}e^{\nu s}(A_0^2 + M^2\mathbb
E|x(s;t_0,x_0)|^2)ds \right].
\end{align}
Denote
\begin{equation*}\label{eqC1_40}
u(t):=e^{\nu t} \mathbb E|x(t;t_0,x_0)|^2,\quad \hbox{for } t\ge t_0.
\end{equation*}
Then it follows from (\ref{eqC1_3}) that
\begin{align}\label{eqC1_41}
u(t) & \le 3\mathcal N^2 \left[e^{\nu t_0}\mathbb E|x_0|^{2}+ 2(1+\frac1\nu) \int_{t_0}^{t} (A_0^2 e^{\nu s}+M^{2}u(s))ds\right] \nonumber \\
&= 3\mathcal N^2 e^{\nu t_0}\mathbb E|x_0|^{2}+\frac{6\mathcal
N^2A_0^2}{\nu^2}(\nu+1)(e^{\nu t}-e^{\nu t_0})+  6 \mathcal
N^2M^{2}(1+\frac1\nu)\int_{t_0}^{t}u(s)ds.
\end{align}
Along with inequality (\ref{eqC1_41}) we consider the equation
\begin{equation*}
 v(t)=
3\mathcal N^2 e^{\nu t_0}\mathbb E|x_0|^{2}+\frac{6\mathcal
N^2A_0^2}{\nu^2}(\nu+1)(e^{\nu t}-e^{\nu t_0})+  6 \mathcal
N^2M^{2}(1+\frac1\nu)\int_{t_0}^{t}v(s)ds;
\end{equation*}
that is, $v(t)$ satisfies the equation
\begin{equation*}\label{eqC1_42}
v'(t) = 6 \mathcal N^2M^{2}(1+\frac1\nu) v(t) + {6\mathcal
N^2A_0^2}(1+\frac1\nu)e^{\nu t}
\end{equation*}
with initial condition $v(t_0)= 3\mathcal N^2 e^{\nu t_0}\mathbb E|x_0|^{2}$.
Solving this equation for $v(t)$ we get
\begin{align*}\label{eqC1_5}
v(t)& = 3\mathcal N^2 e^{\alpha(t-t_0) +\nu t_0}\mathbb E|x_0|^{2} + \frac{\beta}{\nu-\alpha}[e^{\nu t} - e^{\alpha(t-t_0)+\nu t_0}]\nonumber\\
&= \left(3\mathcal N^2 \mathbb E|x_0|^{2} - \frac{\beta}{\nu-\alpha}\right) e^{\alpha(t-t_0) +\nu t_0} + \frac{\beta}{\nu-\alpha}e^{\nu t},
\end{align*}
where
\[
\alpha:= 6\mathcal N^2 M^2 (1+\frac1\nu) \quad \hbox{and} \quad \beta:= 6\mathcal N^2 A_0^2 (1+\frac1\nu).
\]
The comparison principle then implies that $u(t)\le v(t)$ for all $t\ge t_0$, so it follows from the
definition of $u(t)$ that for $t\ge t_0$ we have
\begin{align*}
\mathbb E|x(t;t_0,x_0)|^{2}\le \left(3\mathcal N^2 \mathbb E|x_0|^{2}
- \frac{\beta}{\nu-\alpha}\right) e^{-(\nu-\alpha)(t-t_0)} + \frac{\beta}{\nu-\alpha},
\end{align*}
which is just \eqref{eqC1_00}. The proof is complete.
\end{proof}

Note that condition (iv) in Theorem \ref{thC1} implies $\nu>\alpha=6\mathcal N^2 M^2 (1+1/\nu)$. So we have the following

\begin{coro}\label{UB}
Under the conditions of Theorem \ref{thC1}, for arbitrary $\varepsilon >0$ and $r>0$ there exists a positive
number $T(\varepsilon,r)$ such that
$$
\mathbb E|x(t;t_0,x_0)|^2  <   \frac{{6}\mathcal N^2
A_0^2(\nu+1)}{\nu^2 - 6\mathcal N^{2} M^{2}(\nu+1)} +\varepsilon
$$
for all $||x_0||_2 \le r$ and $t\ge t_0+T(\varepsilon,r)$. In other words, we have
\begin{equation*}
\limsup\limits_{t\to \infty} \mathbb E|x(t;t_0,x_0)|^2 \le
 \frac{{6}\mathcal N^2 A_0^2(\nu+1)}{\nu^2 - 6\mathcal N^{2}
M^{2}(\nu+1)}
\end{equation*}
uniformly with respect to $x_0$ on every bounded subset of
$L^2(\mathbb P,H)$.
\end{coro}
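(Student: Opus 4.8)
The plan is to extract the corollary directly from the quantitative bound \eqref{eqC1_00} in Theorem \ref{thC1}, whose right-hand side is already the sum of a decaying exponential and precisely the constant claimed in the corollary. Set $C:=\frac{6\mathcal N^2 A_0^2(\nu+1)}{\nu^2-6\mathcal N^2 M^2(\nu+1)}$ and $C_1:=\frac{2A_0^2(\nu+1)}{\nu^2-6\mathcal N^2 M^2(\nu+1)}$, and write $\alpha:=6\mathcal N^2 M^2(1+1/\nu)$. The remark immediately preceding the corollary records that hypothesis (iv) forces $\nu-\alpha>0$, so the exponential factor $e^{-(\nu-\alpha)(t-t_0)}$ appearing in \eqref{eqC1_00} decays to $0$ as $t\to\infty$; this positivity is the only structural input beyond \eqref{eqC1_00} itself.

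First I would rewrite \eqref{eqC1_00} as $\mathbb E|x(t;t_0,x_0)|^2\le K(x_0)e^{-(\nu-\alpha)(t-t_0)}+C$, where $K(x_0):=3\mathcal N^2(\mathbb E|x_0|^2-C_1)$ is the bracketed coefficient. To gain uniformity over the ball $\|x_0\|_2\le r$, I would bound $K(x_0)$ independently of $x_0$: since $\mathbb E|x_0|^2=\|x_0\|_2^2\le r^2$ and both $\mathbb E|x_0|^2$ and $C_1$ are nonnegative, one has $|K(x_0)|\le 3\mathcal N^2(r^2+C_1)=:K(r)$, a constant depending only on $r$ and the data. Hence $\mathbb E|x(t;t_0,x_0)|^2\le K(r)e^{-(\nu-\alpha)(t-t_0)}+C$ for every $x_0$ with $\|x_0\|_2\le r$ and every $t\ge t_0$. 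Given $\varepsilon>0$, the first summand is below $\varepsilon$ as soon as $t-t_0>\frac{1}{\nu-\alpha}\ln\frac{K(r)}{\varepsilon}$, so I would set $T(\varepsilon,r):=\max\{0,\frac{1}{\nu-\alpha}\ln\frac{K(r)}{\varepsilon}\}$; for $t\ge t_0+T(\varepsilon,r)$ this yields the first assertion $\mathbb E|x(t;t_0,x_0)|^2<C+\varepsilon$, uniformly in $\|x_0\|_2\le r$. Letting $t\to\infty$ and then $\varepsilon\to 0$ gives the $\limsup$ bound, the uniformity over bounded subsets of $L^2(\mathbb P,H)$ being inherited from the $x_0$-independent constant $K(r)$.

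There is no genuine analytic difficulty here, as \eqref{eqC1_00} already carries all the quantitative content. The only two points that require a little care are the passage to absolute values in bounding $K(x_0)$ — the bracket $\mathbb E|x_0|^2-C_1$ may be negative when $\mathbb E|x_0|^2$ is small — and the strict positivity of the decay rate $\nu-\alpha$, which is exactly what the remark following Theorem \ref{thC1} supplies.
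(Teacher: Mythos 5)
Your proof is correct and is essentially the argument the paper intends: the corollary is read off directly from the bound \eqref{eqC1_00}, using only the positivity of the decay rate $\nu-\alpha$ noted just before the statement, with the uniformity in $x_0$ coming from bounding the bracketed coefficient by a constant depending only on $r$. The paper leaves this routine verification implicit, and your handling of the possibly negative bracket and the choice of $T(\varepsilon,r)$ fills it in correctly.
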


\begin{theorem}\label{thC2}
Consider the equation \eqref{eq3.4.40}.
Suppose that the following conditions hold:
\begin{enumerate}
\item the semigroup $\{U(t)\}_{t\ge 0}$ acting on the space $H$
is exponentially stable;
\item $F,G\in C(\mathbb R\times  H,  H)$ are globally Lipschitz in $x\in H$ and $Lip(F),Lip(G)\le\mathcal L$;
\item there exists a positive constant $A_0$ such that $|F(t,0)|,|G(t,0)|\leq A_0$ for all $t\in\mathbb{R}$;
\item $\mathcal L< \frac{\nu}{\mathcal N\sqrt{3(\nu+1)}}$.
\end{enumerate}

Then the following statements hold:
\begin{enumerate}
\item for any $t\ge t_0$ and $x_1,x_2\in L^{2}(\mathbb P,H)$,
\begin{align}\label{eqC1_001}
& \mathbb E|x(t;t_0,x_1)-x(t;t_0,x_2)|^2 \nonumber\\
\le &~ 3\mathcal N^2 \exp\left\{-[\nu
-3(1+\frac1\nu) \mathcal N^2 \mathcal L^2](t- t_0)\right\} \mathbb
 E|x_1-x_2|^2;
\end{align}

\item
equation \eqref{eq3.4.40} has a unique solution $\varphi \in
C_{b}(\mathbb R,L^2(\mathbb P,H))$ which is globally asymptotically
stable and
\begin{align}\label{eqC1_01}
 &\mathbb E|x(t;t_0,x_0)-\varphi(t)|^2 \nonumber\\
 &\le 3\mathcal N^2 \exp\left\{-\Big[\nu
-3(1+\frac1\nu) \mathcal N^2 \mathcal L^2\Big](t- t_0)\right\} \mathbb
 E|x_0-\varphi(t_0)|^2
\end{align}
for any $t\ge t_0$ and $x_0\in L^{2}(\mathbb P,H)$.



\end{enumerate}
\end{theorem}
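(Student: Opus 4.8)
The plan is to prove statement (i) first by a direct second-moment estimate and then to read off statement (ii) as a short consequence of (i) together with the existence result of Theorem \ref{t3.4.4}.

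For statement (i), I would fix $x_1,x_2\in L^2(\mathbb P,H)$ and set $u(t):=x(t;t_0,x_1)-x(t;t_0,x_2)$. Subtracting the two mild-solution integral equations gives
\[
u(t)=U(t-t_0)(x_1-x_2)+\int_{t_0}^t U(t-s)\Delta F(s)\,ds+\int_{t_0}^t U(t-s)\Delta G(s)\,dW(s),
\]
where $\Delta F(s):=F(s,x(s;t_0,x_1))-F(s,x(s;t_0,x_2))$ and $\Delta G$ is defined analogously. I would then follow verbatim the computation \eqref{eqC1_3} from the proof of Theorem \ref{thC1}: apply $|a+b+c|^2\le 3(|a|^2+|b|^2+|c|^2)$, use exponential stability $\|U(t)\|\le\mathcal N e^{-\nu t}$, bound the drift term by the Cauchy-Schwarz inequality and the diffusion term by It\^o's isometry. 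The difference from Theorem \ref{thC1} is that the global Lipschitz hypothesis (ii) replaces the sublinear growth bound, so $\mathbb E|\Delta F(s)|^2\le\mathcal L^2\mathbb E|u(s)|^2$ and $\mathbb E|\Delta G(s)|^2\le\mathcal L^2\mathbb E|u(s)|^2$ enter directly, with no additive constant and no extra factor of $2$. After using $e^{-2\nu(t-s)}\le e^{-\nu(t-s)}$ in the diffusion term, this yields
\[
\mathbb E|u(t)|^2\le 3\mathcal N^2 e^{-2\nu(t-t_0)}\mathbb E|x_1-x_2|^2+3\mathcal N^2\mathcal L^2\Big(1+\tfrac1\nu\Big)\int_{t_0}^t e^{-\nu(t-s)}\mathbb E|u(s)|^2\,ds.
\]
Setting $w(t):=e^{\nu t}\mathbb E|u(t)|^2$ converts this into $w(t)\le 3\mathcal N^2 e^{\nu t_0}\mathbb E|x_1-x_2|^2+3\mathcal N^2\mathcal L^2(1+\tfrac1\nu)\int_{t_0}^t w(s)\,ds$, and Gronwall's inequality (equivalently, the comparison ODE used in Theorem \ref{thC1}) produces exactly \eqref{eqC1_001}. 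Condition (iv) is precisely what makes the exponent $\nu-3(1+\tfrac1\nu)\mathcal N^2\mathcal L^2$ positive, since $3\mathcal N^2\mathcal L^2(\nu+1)<\nu^2$ is equivalent to $\mathcal L<\nu/(\mathcal N\sqrt{3(\nu+1)})$.

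For statement (ii), I would first note that hypothesis (iv) implies $\mathcal L<\nu/(\mathcal N\sqrt{2+\nu})$, because $3(\nu+1)\ge 2+\nu$ for $\nu>0$; hence Theorem \ref{t3.4.4}(i) applies and furnishes a solution $\varphi\in C(\mathbb R,B[0,r])\subset C_b(\mathbb R,L^2(\mathbb P,H))$ defined on all of $\mathbb R$. By uniqueness of solutions of the initial value problem, $x(t;t_0,\varphi(t_0))=\varphi(t)$ for every $t\ge t_0$, so choosing $x_1=x_0$ and $x_2=\varphi(t_0)$ in \eqref{eqC1_001} gives \eqref{eqC1_01}; global asymptotic stability is then immediate from the negativity of the exponent. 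I would also observe that \eqref{eqC1_001} by itself forces uniqueness of the bounded solution in the whole space $C_b(\mathbb R,L^2(\mathbb P,H))$: if $\psi$ were another bounded solution, then $\mathbb E|\psi(t)-\varphi(t)|^2\le 3\mathcal N^2 e^{-\gamma(t-t_0)}\mathbb E|\psi(t_0)-\varphi(t_0)|^2$ with $\gamma>0$, and letting $t_0\to-\infty$ with $t$ fixed (the second moments remaining bounded) forces $\psi\equiv\varphi$.

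The main obstacle is bookkeeping rather than a new idea: the second-moment estimate in (i) is a near-replica of the one already established for Theorem \ref{thC1}, and the delicate point is tracking constants so that the two Lipschitz contributions combine into the single factor $3(1+\tfrac1\nu)\mathcal N^2\mathcal L^2$, in particular noticing that the factor of $2$ present in Theorem \ref{thC1} disappears here. Once the sharp integral inequality is in hand, part (ii) is a short deduction from part (i) and Theorem \ref{t3.4.4}, the only care being to verify the hypothesis chain and to justify the identification $x(t;t_0,\varphi(t_0))=\varphi(t)$ via uniqueness.
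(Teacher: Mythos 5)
Your proposal is correct and follows essentially the same route as the paper: the same mild-solution decomposition with the $3(|a|^2+|b|^2+|c|^2)$ bound, Cauchy--Schwarz plus It\^o isometry, the substitution $u(t)=e^{\nu t}\mathbb E|\omega(t)|^2$ and a Gronwall/comparison argument for (i), and then (ii) by citing the bounded solution from Theorem \ref{t3.4.4} (the paper invokes condition \eqref{bs}, which (iv) implies, rather than part (i) of that theorem) and the identification $\varphi(t)=x(t;t_0,\varphi(t_0))$. Your additional remark that \eqref{eqC1_001} itself yields uniqueness of the bounded solution in all of $C_b(\mathbb R,L^2(\mathbb P,H))$ by letting $t_0\to-\infty$ is a small but worthwhile refinement not spelled out in the paper.
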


\begin{proof}
(i). Denote by $\omega(t):=x(t;t_0,x_1)-x(t;t_0,x_2)$ for any $t\ge t_0$. Since
\begin{align*}
x(t;t_0,x_{i}) &= U(t-t_0)x_{i}+\int_{t_0}^{t}U(t-s)F(s,x(s;t_0,x_{i}))ds \\
&\quad +\int_{t_0}^{t}U(t-s)G(s,x(s;t_0,x_{i}))dW(s)
\end{align*}
for $i=1,2$, we have
\begin{align*}
 \omega(t) & =U(t-t_0)(x_1-x_2) + \int_{t_0}^{t} U(t-s) [F(s,x(s;t_0,x_{1})) - F(s,x(s;t_0,x_{2}))]ds \\
 &\quad + \int_{t_0}^{t} U(t-s) [G(s,x(s;t_0,x_{1})) - G(s,x(s;t_0,x_{2}))]dW(s).
\end{align*}
Consequently, 
\begin{align}\label{eqC2.2}
\mathbb E|\omega(t)|^2
& \le  3\bigg(\mathbb E|U(t-t_0)(x_1-x_2)|^2 \nonumber\\
&\quad +
\mathbb E\bigg| \int_{t_0}^{t} U(t-s) [F(s,x(s;t_0,x_{1})) - F(s,x(s;t_0,x_{2}))]ds\bigg|^2 \nonumber \\
& \quad +\mathbb E\left|\int_{t_0}^{t} U(t-s) [G(s,x(s;t_0,x_{1})) - G(s,x(s;t_0,x_{2}))]dW(s)\right|^2\bigg) \nonumber \\
& \le 3\bigg(\mathcal N^2 e^{-2\nu (t-t_0)}\mathbb E|x_1-x_2|^2 \nonumber \\
&\quad   + \mathcal N^2\int_{t_0}^{t} e^{-\nu (t-s)} ds
\cdot \int_{t_0}^{t} e^{-\nu (t-s)}\mathbb E|F(s,x(s;t_0,x_{1})) - F(s,x(s;t_0,x_{2}))|^2ds  \nonumber \\
&
\quad +\int_{t_0}^{t}\mathcal N^2 e^{-2\nu (t-s)}\mathbb E|G(s,x(s;t_0,x_{1})) - G(s,x(s;t_0,x_{2}))|^2ds\bigg) \nonumber \\
& \le
3\bigg(\mathcal N^2 e^{-2\nu (t-t_0)}\mathbb E|x_1-x_2|^2 \nonumber\\
& \quad +  (1+\frac1\nu)\mathcal N^2 \mathcal L^2 \int_{t_0}^{t} e^{-\nu (t-s)}
        \mathbb E|x(s;t_0,x_{1}) - x(s;t_0,x_{2})|^2ds\bigg)    \nonumber \\
&\le 3\mathcal N^2 e^{-\nu (t-t_0)}\mathbb E|x_1-x_2|^2 + 3
(1+\frac1\nu) \mathcal N^2 \mathcal L^2 \int_{t_0}^{t}e^{-\nu
(t-s)}\mathbb E|\omega(s)|^2ds.
\end{align}

Set $u(t):=e^{\nu t}\mathbb E|\omega(t)|^2$ for $t\ge t_0$, then from (\ref{eqC2.2}) we get
\begin{equation}\label{eqC2.3}
u(t)\le 3\mathcal N^2 e^{\nu t_0}\mathbb E|x_1-x_2|^2 +
3 (1+\frac1\nu) \mathcal N^2 \mathcal L^2 \int_{t_0}^{t}u(s)ds.
\end{equation}
Along with inequality (\ref{eqC2.3}) we consider the equation
\begin{equation*}\label{eqC2.4}
v(t)= 3\mathcal N^2 e^{\nu t_0} \mathbb E |x_1-x_2|^2 +  3 (1+\frac1\nu) \mathcal N^2 \mathcal L^2 \int_{t_0}^{t}v(s)ds.
\end{equation*}
Solving this equation for $v(t)$ we obtain
$$
v(t)=3\mathcal N^2 e^{\nu t_0}\mathbb E|x_1-x_2|^2 \exp\left\{3 (1+\frac1\nu) \mathcal N^2 \mathcal L^2(t-t_0)\right\}.
$$
The comparison principle then implies that  $u(t)\le v(t)$,
i.e.
\begin{equation*}\label{C2.5}
u(t)\le 3\mathcal N^2 e^{\nu t_0}\mathbb E|x_1-x_2|^2 \exp\left\{3 (1+\frac1\nu) \mathcal N^2 \mathcal L^2(t-t_0)\right\},
\quad\hbox{for } t\ge t_0
\end{equation*}
and consequently by the definition of $u(t)$ we get
\begin{equation*}\label{C2.6}
\mathbb E|x(t;t_0,x_1)-x(t;t_0,x_2)|^2\le 3\mathcal N^2 \mathbb E|x_1-x_2|^2 \exp\left\{-\Big[\nu
-3(1+\frac1\nu) \mathcal N^2 \mathcal L^2\Big](t- t_0)\right\}
\end{equation*}
for any $t\ge t_0$.

(ii). By the proof of Theorem \ref{t3.4.4} (see \eqref{bs} and the
paragraph following it), equation \eqref{eq3.4.40} admits a unique
bounded solution $\varphi\in C_b(\mathbb R, L^2(\mathbb P, H))$
under the condition \eqref{bs}, which is met under the current
condition (iv).

To establish inequality (\ref{eqC1_01}) we note that $\varphi(t)=x(t;t_0,\varphi(t_0))$ for any $t\ge t_0$.
Applying \eqref{eqC1_001} we obtain inequality (\ref{eqC1_01}).
\end{proof}

\section{Applications}

In this section, we illustrate our theoretical results by two examples.

\begin{example}\rm
Consider an ordinary differential equation perturbed
by white noise:
\begin{align}\label{ex1}
d y = & \left( -5y + \frac{\cos t+ \sin\sqrt{2}t}{4+\cos\sqrt{3}t}\cdot\frac{y}{y^2+1}\right) d t
+ \frac12 y\sin \Big{(}\frac{1}{2+\cos t+\cos \sqrt2\, t}\Big{)} d W\\
=&:  (Ay + f(t,y)) d t + g(t,y)d W,\nonumber
\end{align}
where $W$ is a one-dimensional two-sided Brownian motion.
It is clear that $A$ generates an exponentially stable semigroup on $\R$ with $\mathcal N=1$ and $\nu=5$. Note that $f$ is quasi-periodic
in $t$ and $g$ is Levitan almost periodic in $t$, uniformly w.r.t $y$ on any bounded subset of $\mathbb R$, so
$f,g$ are jointly Levitan almost periodic. The Lipschitz constants of $f,g$
satisfy $\max\{Lip(f),Lip(g)\}\le 2/3$, so the conditions of Theorems \ref{t3.4.4}, \ref{thC1} and \ref{thC2} are met.

Since the coefficients satisfy both Lipschitz and global linear growth conditions, it follows
that the equation \eqref{ex1} admits global in time solutions.
By Theorem \ref{t3.4.4}, \eqref{ex1} admits a unique $L^2$-bounded mild solution;
furthermore, this unique  $L^2$-bounded solution is Levitan almost periodic in distribution by Corollary \ref{corAA1}. By Theorem \ref{thC2},
this Levitan almost periodic in distribution solution is globally asymptotically stable in square-mean sense.
By Corollary \ref{UB}, all the solutions of \eqref{ex1} with $L^2$-initial value are bounded by a constant after
sufficiently long time.

If $f$ remains unchanged but $g(t,y) = y(\sin t +\cos \sqrt2 t)/4$, then $g$ is quasi-periodic in $t$, uniformly w.r.t $y$ on any bounded subset.
In this case $f,g$ are jointly quasi-periodic, so \eqref{ex1} admits a quasi-periodic in distribution solution.
\end{example}

\begin{example}\rm
Consider the stochastic heat equation on the
interval [0,1] with Dirichlet boundary condition:
\begin{align}\label{ex2}
\frac{\partial u}{\partial t} =& \frac{\partial^2 u}{\partial
\xi^2}+ \frac{(\sin t+\cos\sqrt{3}t)\sin u}{3}\\
&
\quad + \frac{u}{u^2+1}\cdot\cos \Big{(}\frac{1}{2+\sin t +\sin \sqrt2\, t}\Big{)} \frac{\partial W}{\partial t}\nonumber \\
=&: \frac{\partial^2 u}{\partial \xi^2} + f(t,u) + g(t,u)\frac{\partial W}{\partial t}, \nonumber\\
u(t,0)=& u(t,1)=0,\;\;\; t>0.  \nonumber
\end{align}
Here $W$ is a one-dimensional two-sided Brownian motion. Let $A$ be the
Laplace operator, then $A: D(A)=H^2(0,1) \cap H^1_0 (0,1)\to
L^2(0,1)$. Denote $H := L^2(0,1)$ and the norm on $H$ by $||\cdot||$. Then the stochastic
heat equation can be written as an abstract evolution equation
\begin{align}\label{aeex}
d Y(t) &=({A}  Y(t)+ {F}(t,Y(t)))d t + {G}(t, Y(t))d W(t)
\end{align}
on the Hilbert space $H$, where
\begin{align*}
Y(t) :=& u(t,\cdot), \quad {F}(t,Y(t)):= f(t, u(t,\cdot)), \quad {G}(t,Y(t)):= g(t, u(t,\cdot)).
\end{align*}
Note that, the operator $A$ has eigenvalues
$\{-n^2\pi^2\}_{n=1}^\infty$ and generates a $\mathcal
C^0$-semigroup $T(t)$ on $H$ satisfying $||T(t)||\le
e^{-\pi^2 t}$ for $t\ge 0$, i.e. $\mathcal N=1$ and $\nu=\pi^2$. Note that
$\max\{Lip(F),Lip(G)\}\le 1$, so it is immediate to verify that conditions (C1)-(C2) hold and the restrictions on Lipschitz constant in
Theorems \ref{t3.4.4}, \ref{thC1} and \ref{thC2} are satisfied. As pointed out in Remark \ref{remL10}-(iii), we need to check condition (C3).
Indeed, since $f$ is bounded, for given $\alpha>0$ we have
\begin{equation}\label{exui}
\sup_{t\in\mathbb R, ||u||\le M}\int_{[0,1]}|f(t,u(x))|^{2+\alpha} d x <\infty
\end{equation}
for any $M>0$, i.e. the family $\{|f(t,u(x))|^2: t\in \mathbb R, ||u||\le M\}$ of functions of $x$ is uniformly integrable on $[0,1]$.
This implies that for $t_n\to t$, by choosing $k$ large enough,
\begin{align*}
&\int_{[0,1]} |f(t_n, u(x)) - f(t,u(x))|^2 d x \\
&\le \int_{[0,1]\cap \mathcal M_k}   |f(t_n, u(x)) - f(t,u(x))|^2 d x +
\int_{[0,1]\setminus \mathcal M_k}   |f(t_n, u(x)) - f(t,u(x))|^2 d x
\end{align*}
is sufficiently small, where $\mathcal M_k:=\{x\in[0,1]: |u(x)|\le k\}$. That is, (C3) holds.

Finally note that $F$ is quasi-periodic in $t$ and $G$ is Levitan
almost periodic in $t$, uniformly w.r.t. $Y\in H$.


By Theorem \ref{t3.4.4}, \eqref{aeex} (and hence \eqref{ex2}) admits a unique $L^2(\mathbb P,H)$-bounded mild solution,
and by Corollary \ref{corAA1} this unique bounded solution is Levitan almost periodic
in distribution. By Theorem \ref{thC2}, this bounded solution is globally asymptotically stable in square-mean sense.
By Corollary \ref{UB}, all the solutions of \eqref{ex2} with $L^2$-initial value are bounded by a constant after
sufficiently long time.

\end{example}

\begin{remark}\rm
As pointed out in Remark \ref{remL10}, to apply our results for stochastic PDEs, we need to check the condition (C3), which is not
easy to check in some situations.  We will try to weaken or remove this condition in our future work.
\end{remark}

\section*{Acknowledgements}

This work is partially supported by NSFC Grants 11271151, 11522104, and the startup and
Xinghai Youqing funds from Dalian University of Technology.

\end{document}